\newcommand\COMMENT[1]{}
\newdimen\margin
\def\textno#1&#2\par{%
	\margin=\hsize
	\advance\margin by -4\parindent
	\setbox1=\hbox{\sl#1}%
	\ifdim\wd1 < \margin
	$$\box1\eqno#2$$\endgraf%
	\else
	\bigbreak
	\line{\indent$\vcenter{\advance\hsize by -3\parindent
			\sl\noindent#1}\hfil#2$}%
	\bigbreak
	\fi}
\newtheorem{THM}{Theorem} 
\newtheorem{LEM}[THM]{Lemma}
\newtheorem{COR}[THM]{Corollary}
\newtheorem{PROP}[THM]{Proposition}
\newtheorem{PROB}[THM]{Problem}
\newtheorem{CLAIM}{Claim}
\theoremstyle{definition}
\newtheorem{EX}[THM]{Example}
\def\specrelabove#1#2{\mathrel{\mathop{\kern0pt #1}\limits^{#2}}}
\def\specrelbelow#1#2{\mathrel{\mathop{\kern0pt #1}\limits_{#2}}}
\def\F{\mathcal F}
\newcommand\N{\mathbb N}
\newcommand\R{\mathbb R}
\newcommand\Z{\mathbb Z}
\newcommand{\Max}{\mu}
\def\lowfwd #1#2#3{{\mathop{\kern0pt #1}\limits^{\kern#2pt\raise.#3ex \vbox to 0pt{\hbox{$\scriptscriptstyle\rightarrow$}\vss}}}}
\def\lowbkwd #1#2#3{{\mathop{\kern0pt #1}\limits^{\kern#2pt\raise.#3ex
			\vbox to 0pt{\hbox{$\scriptscriptstyle\leftarrow$}\vss}}}}
\def\vr{\lowfwd r{1.5}2}
\def\rv{\lowbkwd r{1.5}2}
\def\vS{\vec S}
\def\vSdash{{\mathop{\kern0pt S\lower-1pt\hbox{${}
				\scriptstyle'$}}\limits^{\kern2pt\raise.1ex
			\vbox to 0pt{\hbox{$\scriptscriptstyle\rightarrow$}\vss}}}}
\def\vs{\lowfwd s{1.5}2}
\def\sv{\lowbkwd s{1.5}2}
\def\vsdash{{\mathop{\kern0pt s\lower.5pt\hbox{${}
				\scriptstyle'$}}\limits^{\kern0pt\raise.02ex
			\vbox to 0pt{\hbox{$\scriptscriptstyle\rightarrow$}\vss}}}}
\def\svdash{{\mathop{\kern0pt s\lower.5pt\hbox{${}
				\scriptstyle'$}}\limits^{\kern0pt\raise.02ex
			\vbox to 0pt{\hbox{$\scriptscriptstyle\leftarrow$}\vss}}}}
\def\vsi{\lowfwd {s_i}11}
\def\vsj{\lowfwd {s_j}01}
\def\vsell{\lowfwd {s_\ell}11}
\def\vsik{\lowfwd {s_{i_k}}{-2}2}
\def\vsione{\lowfwd {s_{i_1}}{-2}2}
\def\vt{\lowfwd t{1.5}2}
\def\tv{\lowbkwd t{1.5}2}
\def\vU{\vec U}
\def\vxdash{{\mathop{\kern0pt x\lower.5pt\hbox{${}
				\scriptstyle'$}}\limits^{\kern0pt\raise.02ex
			\vbox to 0pt{\hbox{$\scriptscriptstyle\rightarrow$}\vss}}}}
\def\xvdash{{\mathop{\kern0pt x\lower.5pt\hbox{${}
				\scriptstyle'$}}\limits^{\kern0pt\raise.02ex
			\vbox to 0pt{\hbox{$\scriptscriptstyle\leftarrow$}\vss}}}}
\def\es{\emptyset}
\def\sub{\subseteq}
\def\sm{\smallsetminus}
\title{Point sets and functions inducing tangles\\ of set separations}
\author{Reinhard Diestel \and Christian Elbracht \and Raphael~W.\ Jacobs}
\date{}
\begin{document}
	\abovedisplayshortskip=-3pt plus3pt
	\belowdisplayshortskip=6pt
	
	\maketitle

	\begin{abstract} \noindent
		Tangles, as introduced by Robertson and Seymour, were designed as an indirect way of capturing clusters in graphs and matroids.
		They have since been shown to capture clusters in much broader discrete structures too.
		But not all tangles are induced by a set of points, let alone a cluster.
		We characterise those that are: the tangles that are induced by a subset of or function on the set of data points whose connectivity structure they are meant to capture.
		
		We offer two such characterisations.
		The first is in terms of how many small sides of a tangle's separations it takes to cover the ground set.
		The second uses a new notion of duality for oriented set separations that becomes possible if these are no longer required to be separations of graph or matroids.
	\end{abstract}

	\section{Introduction} \label{sec:Introduction}
	
	Tangles were introduced by Robertson and Seymour as a tool in their graph minors project~\cite{GMX}.
	They provided a novel, indirect, way to capture highly cohesive substructures, or `clusters', in graphs.
	The idea is that since clusters cannot be divided into significantly large parts by graph separations of low order, any given cluster implicitly orients every low-order separation towards its `big' side, the side that contains most of the cluster.
	It turned out that this induced orientation of all the low-order separations collectively contains all the information needed to prove fundamental theorems about the cluster structure of a graph, which has made tangles a powerful tool in the connectivity theory of graphs.\looseness=-1
	
	Over the last decade, the theory of tangles has been significantly generalised to other discrete structures~\cite{SARefiningInessParts, AbstractTangles, CanonicalToTSubmodular, FiniteSplinters, InfiniteSplinters, ToTfromTTD}.
	These include matroids~\cite{BranchDecMatroids}, but also bespoke structures that come with concrete clustering applications~\cite{TangleBook, TangleClusteringWeakStrong}.
	This has been made possible by re-casting the notion and theory of tangles in terms of a purely algebraic framework of `separation systems'~\cite{ASS, ProfilesNew, TangleTreeAbstract}, which encompass the notions of separation from all these various different contexts. 
	Although these separation systems are very general, the central tangle theorems are still valid in this framework.
	
	In this paper we are concerned with only the most basic type of separation systems, those of sets.
	A~\emph{separation} of a set~$V\!$ is an unordered pair~$s=\{A,B\}$ of subsets of~$V\!$ such that~$A\cup B = V\!$.
	It has two \emph{orientations}: the ordered pair~$(A,B)$, which we think of as pointing towards~$B$, and its \emph{inverse}~$(B,A)$.
	We usually denote the two orientations of a separation~$s$ by arrows: one of them is denoted as~$\vs$, the other as~$\sv$, but it does not matter which is which.
	
	Now consider a particular set~$S$ of separations of a set~$V\!$, and a subset~$X\sub V\!$ that is a `cluster' in the sense that the separations in~$S$ cannot divide it evenly: let us assume that for every~$\{A,B\}\in S$ more than two thirds of~$X$ lies in~$A\sm B$ or in~$B\sm A$.
	If $X$ has more elements in~$B$ than in~$A$, we think of $X$ as orienting this separation towards~$B$; in our notation it orients it `as~$(A,B)$'.
	
	Any orientation~$\tau$ of (all the separations in)~$S$ induced by a cluster~${X \subseteq V}$ in this way has the following property, which no longer refers to~$X$: whenever~$\tau$ orients three separations~${\{A_i,B_i\}\in S}$ ($i=1,2,3$) towards~$B_i$, their `small sides'~$A_i$ cannot cover~$V\!$, because each contains less than a third of~$X$.
	This is the most basic definition of a \emph{tangle} of~$S$: any orientation of~$S$ such that no three small sides cover~$V\!$.
	Note that the meaning of `small' here is intrinsic to~$\tau$: the side of a separation towards which~$\tau$ orients it is now called~`big', its other side `small'.
	No reference to a `cluster'~$X$ is made in this definition of a tangle.
	
	This abstract definition of a tangle has made it possible to investigate clusters in a graph or data set without referring to them directly in the usual concrete way, as sets of vertices or data points. In particular, one can investigate the relative structure of clusters without even having found them in this concrete sense~-- a~sense which, moreover, may be inadequate given the fuzzy nature of many real-world clusters, which often do not enable us to decide easily which data points belong to a cluster and which do not.
	
	However, tangles are not equivalent to clusters but more general: while every cluster, in our earlier informal sense, gives rise to a tangle, not every tangle is induced by such a cluster. We shall see an example in a moment.%
	\COMMENT{}
	Tangles that do not come from clusters can still be interesting; the \emph{text tangles} in~\cite{TangleBook,TextTangles} are a typical example in the context of set separations.%
	\COMMENT{}
	
	To be a little more formal, let us say that a set~$X\sub V\!$ of `points' \emph{induces} an orientation~$\tau$ of~$S$ if, for every~$(A,B)\in\tau$, there are more elements of~$X$ in~$B$ than in~$A$.
	It is an open question whether every tangle of a graph is induced by some set of its vertices.%
	\footnote{It was shown in~\cite{weighted_deciders_AIC} that graph tangles are induced by sets of vertices with weights assigned to them; we then say that these weight functions induce those tangles; see later.}%
	\COMMENT{}
	Tangles of more general set separations, however, need not be induced by a set of points.
	Let us construct a simple example.%
	\COMMENT{}
	\COMMENT{}

	
	\medbreak
	
	The basic idea of our construction is that we start with~$S$ and an `orientation' ${\tau = \{\vs\mid s\in S\,\}}$ of~$S$ as just a collection of names. Every~$\vs$ will eventually become a pair~$(A,B)$ of subsets of some set~$V\!$ to be constructed; a pair of subsets that form a separation of~$V\!$. In order to flesh out our notational shells for ${\tau = \{\vs\mid s\in S\}}$ and each $\vs=(A,B)$ in a way that makes~$\tau$ into a tangle of~$S$ not induced by any subset of~$V\!$, we build~$V\!$ element by element, immediately assigning every~$v\in V\!$ to either~$A$ or~$B$ for every~$\vs = (A,B)\in\tau$.
	When we are done, we shall verify that~$\tau$ is indeed a tangle of~$S$ not induced by any set in~$V\!$.
	
	To implement this plan, we create for every $3$-set~$\sigma \subseteq \tau$ one element~$v_\sigma$ for~$V$ so that these~$v_\sigma$ are distinct for different~$\sigma$, and we let~$V$ be the set of all these~$v_\sigma$.
	For each~$(A,B) \in \sigma$ we put~$v_\sigma$ in~$B$, and for each~$(A,B) \in \tau \sm \sigma$ we put~$v_\sigma$ in~$A$.
	Then for every~$\vs = (A,B)\in\tau$ we have~$B = \{\,v_\sigma \mid \vs\in\sigma\}$ and~$A = V \sm B$.
	Now if~$|S| = m$, say, then~$|V| = \binom{m}{3}$, and for every~$(A,B)\in \tau$ we have~$|B| = \binom{m-1}{2}$.
	Let us choose~$S$ so that~$m \ge 6$.
	
	By construction,~$\tau = \{\vs\mid s\in S\,\}$ is a tangle of~$S$: the big sides of any three\,$\vs\in\tau$ have an element in common.
	Conversely,
	\begin{equation*} \label{equ:star}
		\textit{every~$v\in V\!$ lies on the big side of exactly three elements of~$\tau$.}
		\tag{$*$}
	\end{equation*}
	\COMMENT{}
	
	A simple double count now shows that~$\tau$ is not induced by any set~$X\sub V\!$.
	Indeed, suppose it is and let
	\begin{equation*}
		d := \sum_{(A,B)\in\tau} |X\cap B|.
	\end{equation*}
	Then~$|X\cap B| > |X|/2$ for every~$(A,B)\in\tau$, because $X$ induces~$\tau$, and hence $d > m\,|X|/2$.
	On the other hand, by~$\eqref{equ:star}$, each~$x\in X$ lies in~$B$ for exactly three~$(A,B)\in \tau$, so~$d$ counts it three times:~$d = 3\,|X|$.
	Putting these together we obtain~$3\,|X| > m\,|X|/2$.
	This implies~$m<6$, contrary to our assumption.
	
	\medbreak
	
	Is it possible to distil from this example some property of~$\tau$ that identifies \emph{all} the tangles that are not induced by any set of `points', elements of their ground set?
	We grappled with this question for quite a while, until we found the following solution.
	
	Given an integer~$k$, we say that an orientation~$\tau$ of~$S$ is \emph{$k$-resilient} if it takes more than~$k$ elements of~$\tau$ to obtain~$V\!$ as the union of their small sides.%
	\COMMENT{}
	Every tangle, by definition, is~$3$-resilient.
	
	Our earlier example of a tangle~$\tau$ not induced by a set in~$V\!$ is not \hbox{$4$-resilient}.
	In~fact, given any%
	\COMMENT{}
	four distinct separations in~$\tau$, by~$\eqref{equ:star}$ every~$v\in V\!$ lies on the small side of at least one of them, so the four small sides have union~$V\!$.
	At the other extreme, every \emph{principal} tangle of a set of bipartitions of a set~$V\!$, one consisting of all bipartitions~$(A,B)$ of~$V\!$ whose big side~$B$ contains some fixed element~$x \in V\!$, is \emph{infinitely resilient} in that it is \hbox{$k$-resilient} for every~$k \in \N$.
	Note that~$\{x\}$ induces this tangle.
	In \cref{sec:Resilience} we shall see that the unique~$5$-tangle of the~$(n\times n)$-grid,	which is induced by its entire vertex set, is~$\Omega(n^2)$-resilient.
	
	These examples seem to suggest that tangles of set separations that are \hbox{$k$-resilient} for large~$k$ are more likely to be induced by subsets of their ground set.
	We can indeed prove such a fact, with an interesting additional twist:
	`large' has to be measured not in terms of~$|V|$ or~$|S|$, but relative to the number of maximal elements of the tangle in the usual partial order of oriented separations (see \cref{sec:Prelims}).
	This dependence on the number of maximal elements in a tangle is quite natural:
	in a~$k$-resilient tangle with at most~$k$ maximal elements, the intersection of all their big sides is non-empty, and it clearly induces this tangle.
	
	Let us say that a function~$w\colon V\!\to\R_{\ge 0}$, which we may think of as placing weights on the elements of~$V\!$, \emph{orients} a separation $s = \{A,B\}$ of~$V\!$ \emph{as} $\vs = (A,B)$ if $w(A) < w(B)$, where $w(A) = \sum_{v\in A} w(v)$ and likewise for~$B$.
	More generally, $w$~\emph{orients} a set~$S$ of separations of~$V\!$ \emph{as}~$\{\vs\mid s\in S\}$ if it orients every $s\in S$ as~$\vs$.\looseness=-1
	
	Conversely, if~$\tau$ is a given orientation of~$S$ and ${w\colon V\!\to\R_{\ge 0}}$ orients~$S$ as~$\tau$, we say that $w$ {\em induces\/}~$\tau$. If~$w$ induces~$\tau$ and takes values in~$\{0,1\}$, we likewise say that $X = w^{-1}(1)\sub V\!$ {\em induces\/}~$\tau$;  note that this agrees with our earlier informal notion of tangle-inducing subsets of~$V\!$.
	
	Our earlier observation that $k$-resilient tangles with at most~$k$ maximal elements are induced by point sets can be strengthened for inducing functions:
	
	\begin{THM} \label{thm:ResilienceWeightedInformal}
		A tangle with~$m$ maximal elements is induced by some function if it is~$k$-resilient for some $k > \frac{m}{2}$.
	\end{THM}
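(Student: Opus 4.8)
\medbreak\noindent\emph{Proof proposal.} The plan is to reduce the statement to the finitely many maximal elements of~$\tau$ and then solve a small linear feasibility problem whose dual is controlled precisely by $k$-resilience. Write the maximal elements of~$\tau$ as $(A_1,B_1),\dots,(A_m,B_m)$. First I would note that it suffices to find a function $w\colon V\!\to\R_{\ge 0}$ with $w(A_i)<w(B_i)$ for every~$i$: any $(A,B)\in\tau$ lies below some maximal $(A_i,B_i)$, so $A\sub A_i$ and $B\supe B_i$, whence $w(A)\le w(A_i)<w(B_i)\le w(B)$ and $w$ induces~$\tau$. If $\bigcup_{i=1}^m A_i\ne V\!$, pick a point lying in all the~$B_i$; its indicator function is such a~$w$, and we are done. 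So assume $\bigcup_{i=1}^m A_i=V\!$. These $m$ small sides cover~$V\!$, so $k$-resilience forces $m\ge k+1$, and hence $m-k<m/2$ since $k>m/2$.

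For $v\in V$ set $C_v:=\{\,i\in[m]\mid v\in A_i\,\}$. Only the finitely many ``patterns''~$C_v$ matter — points with the same pattern are interchangeable throughout — so one may assume~$V$ finite. The key step is the following reformulation of $k$-resilience: \emph{every $(m-k)$-element subset $J\sub[m]$ contains~$C_v$ for some~$v\in V\!$.} Indeed, $I:=[m]\sm J$ has size~$k$, so $\bigcup_{i\in I}A_i\ne V\!$ by $k$-resilience, and any point $v\notin\bigcup_{i\in I}A_i$ then satisfies $C_v\sub J$. Now I would seek a probability distribution~$p$ on~$V$ with $\sum_{v\in A_i}p(v)<\tfrac12$ for every~$i\in[m]$; given such a~$p$, the function $w:=p$ works, since then $w(A_i)<\tfrac12<w(V)-w(A_i)\le w(B_i)$ (using $A_i\cup B_i=V$). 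By the minimax theorem — equivalently, LP duality for this feasibility problem — such a~$p$ exists as soon as for every probability distribution~$q$ on~$[m]$ there is a~$v\in V$ with $q(C_v)<\tfrac12$. To verify this, let~$J$ be the set of the $m-k$ indices of least $q$-weight; then $q(J)\le\frac{m-k}{m}<\tfrac12$, and by the reformulation $C_v\sub J$ for some~$v$, so $q(C_v)\le q(J)<\tfrac12$. This would complete the proof.

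I expect the one genuinely non-routine step to be recognising that $k$-resilience is exactly the hypothesis that drives the minimax argument, via the reformulation ``every $(m-k)$-subset of the maximal-element index set swallows some pattern~$C_v$''; the numerical threshold $k>m/2$ of the theorem then drops out of the bound $q(J)\le\frac{m-k}{m}$ on the $q$-lightest $(m-k)$ coordinates. Everything else — the reduction to the maximal elements (using that each element of~$\tau$ lies below a maximal one), the reduction to finitely many patterns (which preserves $k$-resilience, since whether $\bigcup_{i\in I}A_i=V$ depends only on which patterns occur), and the case $\bigcup_i A_i\ne V$ — is routine.
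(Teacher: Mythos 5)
Your proof is correct, and it is essentially the same argument as the paper's \emph{second}, alternative proof of this theorem, namely \cref{prop:FResilienceWeighted2}, which re-derives it from the Farkas-type duality of \cref{thm:WeightedLPDuality} and \cref{cor:WeightedLPDualityMax}. Your minimax step is exactly the LP duality encoded in Farkas's Lemma applied to the $\pm1,0$ matrix recording which side of each maximal separation a point lies on; your set $J$ of the $m-k$ $q$-lightest indices is just the complement of the paper's set $\Max'$ of the $k$ heaviest separations; and your bound $q(J)\le\frac{m-k}{m}<\frac12$ is the same numeric observation as their $w'(\Max')>w'(\Max\sm\Max')$. So you have, in effect, rediscovered \cref{prop:FResilienceWeighted2}; even your preliminary reduction to the maximal elements is \cref{lem:MaximalSeparations}.

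It is worth knowing that this is \emph{not} the route the paper takes for its \emph{primary} proof. There, \cref{thm:ResilienceWeighted} (the precise form of this theorem) is derived from the more general \cref{thm:DecidableWeighted}, which introduces the notion of being $k$-locally $\ell$-induced and \emph{constructs} an inducing weight function explicitly as a sum $w=\sum_{\Max'\in\Max^{(k)}}w_{\Max'}$ of local inducers over all $k$-subsets of $\Max$, then verifies $w(A)<w(B)$ on each maximal $(A,B)$ by a binomial-coefficient count that makes the threshold $k>\frac{m}{1+1/\ell}$ appear. That route buys a stronger statement (an if-and-only-if characterisation of orientations induced by functions, via $k$-locally $\ell$-induced), at the cost of a somewhat heavier construction; your LP-duality argument, like the paper's \cref{prop:FResilienceWeighted2}, is shorter and cleaner but proves only the resilience case and does not by itself recover the full \cref{thm:DecidableWeighted}. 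Both routes are valid and both appear in the paper; only the emphasis differs.
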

	
	\noindent
	We shall see that, as a general bound for all tangles, this is best possible.
	
	For our proof of \cref{thm:ResilienceWeightedInformal} we introduce the notion of being `locally induced', which generalises the idea of resilience.
	We show in \cref{thm:DecidableWeighted} that an orientation~$\tau$ of a	set~$S$ of separations is induced by a function if and only if there exist suitable parameters~$k$ and~$\ell$ such that~$\tau$ is \hbox{`$k$-locally $\ell$-induced'}.
	In particular, there are such suitable parameters~$k$ and~$\ell$ if~$\tau$ is highly resilient compared with its number of maximal elements, our \cref{thm:ResilienceWeightedInformal} above.
	
	For our second characterisation of tangles induced by functions%
	\COMMENT{}
	we exploit the recent notion of \emph{duality} of sets of separations.
	Duality between oriented set separations naturally arises in applications of tangle theory.
	For example, consider an online shop with a set~$V\!$ of items on sale and a history~$P$ of purchases made last year~\cite{TanglesSocial, TangleBook}.
	In this setting, two different sets of separations occur.
	Every purchase in~$P$ induces a bipartition of~$V\!$ into the items bought versus those not bought.
	Equally, every item in~$V\!$ defines a bipartition of~$P$ into those purchases that included it versus those that did not.
	The tangles of these two sets of separations can be shown to interact~\cite{ASSduality}, and they will help us to obtain a second characterisation of tangles induced by functions, \cref{thm:WeightedLPDuality}.
	This will also imply \cref{thm:ResilienceWeightedInformal}.
	
	In our third contribution in this paper we show that some tangles are induced by point sets if the separations they orient are endowed with an \emph{order function}, a~function that assigns an integer ${|s| =: |A,B|\ge 0}$ to every separation ${s = \{A,B\} \in U}$ where~$U = U(V)$ is the set of all separations of a set~$V\!$ (see \cref{sec:Prelims} for the precise definitions). Given such an order function on~$U$ and an integer~$k \ge 1$, the {\em$k$-tangles\/} in~$U$ are the tangles of its subset~$S_k := \{\,s\in U : |s| < k\,\}$.
	
	Elbracht, Kneip, and Teegen~\cite{weighted_deciders_AIC} showed that the~$k$-tangles in~$U$ are induced by functions on~$V\!$, though not necessarily by subsets of~$V\!$, when their order is defined as $|A,B| := |A \cap B|$.%
	\COMMENT{}
	In \cref{sec:Extendable} we strengthen their result for~$k$-tangles in~$U$ which extend to~$2k$-tangles in~$U$, by showing that such~$k$-tangles are even induced by sets.
	It would be interesting to know whether similar results hold for other submodular (see~\cref{sec:Prelims}) order functions on~$U$ than the above.

	\section{Preliminaries} \label{sec:Prelims}
	
	This section collects together the definitions we need in this paper.
	While we shall work only with separations of sets as considered in the introduction, all definitions given here fit into the more general framework of `abstract separation systems'~\cite{ASS}.
	From this framework we shall borrow some notations which we will also introduce in what follows.
	
	In this paper we consider separations of finite sets~$V$, this finiteness assumption on~$V$ will not be mentioned explicitly in the remainder of this paper.
	Throughout we will consider various finite sets~$V$; if~$V$ is not specified explicitly, then~$V$ is any arbitrary finite set.
	
	For definitions around graphs we refer the reader to \cite{DiestelBook16noEE}.
	For every~$k \in \N$ we write~$[k] := \{1, \dots, k\}$, we call a set with~$k$ elements a~\emph{$k$-set}, and we denote the set of all~$k$-element subsets of a set~$X$ as~$X^{(k)}$.

	\subsection{Separations of sets}
	An \emph{(unoriented) separation} of~$V$ is an unordered pair~$\{A, B\}$ of subsets~$A$ and~$B$ of~$V$, its \emph{sides}, such that~$A \cup B = V$.
	The two \emph{orientations} of~$\{A,B\}$ are the ordered pairs~$(A, B)$ and~$(B, A)$ which is the \emph{inverse} of~$(A,B)$.\footnote{Note that~$(A,B)$ and~$(B,A)$ coincide if and only if~$A = B = V$. In particular, the oriented separation~$(V,V)$ equals its own inverse.}
	We write~$U(V)$ for the set of all unoriented separations of~$V$.
	
	Every ordered pair~$(A,B)$ of subsets of~$V$ with~$A \cup B = V$ is an \emph{oriented separation} of~$V$.
	Its \emph{underlying} unoriented separation is~$\{A,B\}$, and~$(B, A)$ is its \emph{inverse}.
	Given an oriented separation~$(A,B)$ of~$V$, we refer to~$A$ as its \emph{small side} and call~$B$ its \emph{big side}.
	We shall informally use the term `separation' also as a short term for oriented separations, but we will only do so if the meaning is unambiguous.
	
	As indicated in the introduction, we fix the following notational conventions for separations for better readability:
	unoriented separations will be denoted as lowercase letters, such as~$s$.
	Given an unoriented separation~$s$ of a set, we denote its two orientations as~$\vs$ and~$\sv$.
	There is no default orientation: once we have called one of the two orientations~$\vs$, the other one will be~$\sv$, and vice-versa. 
	Oriented separations will be denoted as lowercase letters with a forward or backward arrow on top, such as~$\vs$ and~$\sv$.
	Given an oriented separation~$\vs$ of~$V$, its underlying unoriented separation is denoted as~$s$, and its inverse as~$\sv$.
	
	We define a partial order~$\le$ on the set of oriented separations of~$V$ as follows:
	for two oriented separations~$(A,B)$ and~$(C,D)$ of~$V\!$, we let ${(A,B) \le (C,D)}$ if~$A \subseteq C$ and~$B \supseteq D$; we write~$(A,B) < (C,D)$ if and only if~$(A,B) \le (C,D)$ and~$(A,B) \neq (C,D)$.
	With this definition we in particular have 
	\begin{equation*}
		(A,B) \le (C,D) \iff (B,A) \ge (D,C).
	\end{equation*}
	The \emph{maximal elements} of a set~$\sigma$ of oriented separations of~$V$ are always those separations in~$\sigma$ which are maximal with respect to this partial order~$\le$.
	
	Given two oriented separations $\vr = (A,B)$ and $\vs = (C,D)$ of~$V\!$, their supremum $\vr \vee \vs$ in~$U(V)$ with respect to the partial order~$\le$ is the oriented separation $(A \cup C, B \cap D)$, and their infimum $\vr \wedge \vs$ in~$U(V)$ is ${(A \cap C, B \cup D)}$.
	Note that the supremum and the infimum satisfy DeMorgan's law in that $\rv \vee \sv$ is the inverse of~$\vr \wedge \vs$ for every two oriented separations~$\vr$ and~$\vs$ of~$V$.
	
	A set~$U$ of unoriented separations of~$V$ is a \emph{universe of separations of~$V$} if the set~$\vU := \{ \vs, \sv \mid s \in U\}$ of all orientations of separations in~$U$ is closed under taking suprema and infima in~$U(V)$.
	Note that $U(V)$ itself is a universe of separations of~$V\!$, by definition.
	
	An oriented separation~$\vs = (A,B)$ of~$V$ is \emph{small} if~$B = V$, and \emph{co-small} if~$A = V$; thus,~$\vs$ is small if and only if its inverse~$\sv$ is co-small.
	Note that~$\vs$ is small if and only if~$\vs \le \sv$.
	If~$A = V = B$ or, equivalently, if~$\vs = \sv$, then both~$\vs$ and~$s$ are called \emph{degenerate}; otherwise,~$\vs$ and~$s$ are \emph{non-degenerate}.
	A~set~$\sigma$ of oriented non-degenerate separations of~$V$ is a \emph{star} if~$\vr \le \sv$ for every two distinct~$\vr, \vs \in \sigma$.
	
	Given a universe~$U$ of separations of~$V$, an \emph{order function}~$|\cdot|$ on~$U$ assigns to each~$s \in U$ its \emph{order}~$|s| \in \Z_{\ge 0}$; the order of an orientation~$\vs$ of a separation~${s \in U}$ is defined to be the order of~$s$.
	Given~$k \in \N$ we write~$S_k$ for the set of all separations in~$U$ of order less than~$k$.
	
	An order function on~$U$ is \emph{submodular} if, for every two~$r, s \in U$ and arbitrary orientations~$\vr$ of~$r$ and~$\vs$ of~$s$, we have~${|\vr \vee \vs| + |\vr \wedge \vs| \le |\vr| + |\vs|}$.
	Unless explicitly specified otherwise, we consider the \emph{(standard) order}~$|A,B|$ of a separation~$\{A,B\}$ of~$V$ as the cardinality of its \emph{separator}~$A \cap B$; this order function can easily be seen to be submodular.
	
	There are two special classes of separations which we will consider in this paper:
	a \emph{bipartition} of~$V$ is a separation of~$V$ whose sides are disjoint.
	We denote the set of all bipartitions of~$V$ by~$U_{bip}(V)$; note that~$U_{bip}(V)$ is again a universe of separations of~$V$.
	Since all bipartitions have order~$0$ with respect to our standard order function, we usually consider other order functions on~$U_{bip}(V)$ (see~\cite{TanglesSocial} for various examples).
	
	Another example of separations arises in graphs:
	a \emph{separation}~$\{A, B\}$ of a graph~$G = (V,E)$ is a separation of its vertex set~$V\!$ such that~$G$~has no edges between~$A\sm B$ and~$B\sm A$.
	The set of all separations of~$G$ then forms a universe of separations of~$V\!$.
	
	\subsection{Orientations}
	
	Let~$S$ be a set of unoriented separations of a set~$V$.
	Assigning to every~$s\in S$ either~$\vs$ or~$\sv$ is called \emph{orienting}~$S$ (or the~$s\in S$).
	So an \emph{orientation} of~$S$ is a set~$\tau$ of orientations of separations in~$S$ satisfying~$|\tau \cap \{\vs, \sv\}| = 1$ for every~$s\in S$.%
	\COMMENT{}
	Given an orientation~$\tau'$ of a subset~$S' \subseteq S$, we say that~$\tau'$ \emph{extends} to an orientation~$\tau$ of~$S$ if~$\tau' \subseteq \tau$.
	
	An orientation~$\tau$ of~$S$ is \emph{consistent} if there exist no two~$\vr, \vs \in \tau$ with ${\rv < \vs}$.
	If~$\tau$ is consistent and for every two distinct~$\vs, \vt \in \tau$, we have ${(\sv \wedge \tv) \notin \tau}$, then~$\tau$ is a \emph{profile} of~$S$.
	A profile of~$S$ is \emph{regular} if it does not contain any co-small separation.%
	\COMMENT{}
	Given a universe~$U$ of separations of~$V$, an order function on~$U$, and~$k \in \N$, we call a profile of the corresponding~$S_k$ a \emph{$k$-profile in~$U$}.
	
	Writing~$\vS := \{\vs, \sv \mid s \in S\}$ for the set of all orientations of separations in~$S$, let~$\F \subseteq 2^{\vS}$.%
	\COMMENT{}
	An orientation~$\tau$ of~$S$ is an \emph{$\F$-tangle} of~$S$ if~$\tau$ is consistent and~$\sigma \nsubseteq \tau$ for every~$\sigma \in \F$.
	Let~$\mathcal{T}$ be the set consisting of all sets ${\{(A_1,B_1), (A_2, B_2), (A_3, B_3)\}}$ of (not necessarily distinct) oriented separations in~$\vS$ with~$A_1 \cup A_2 \cup A_3 = V$, i.e.\ the supremum of the~$(A_i, B_i)$ is co-small.
	The~$\mathcal{T}$-tangles of~$S$ are called \emph{(abstract) tangles} of~$S$; they are examples of regular profiles of~$S$~\cite{AbstractTangles}.
	Given an order function on a universe~$U$ of separations of~$V$ and~$k \in \N$, we call a tangle of~$S_k \subseteq U$ a \emph{$k$-tangle in~$U$}.
	
	In the case that all separations in~$S$ are even bipartitions of~$V$, we will also consider~$\F^\ell$-tangles where~$\ell \in \R_{\ge 0}$.\footnote{We slightly deviate here from~\cite{AbstractTangles} in that our~$\F^\ell$ correspond to their~$\F^\ell_3$ and in that we consider~$\ell \in \R_{\ge 0}$ instead of~$\ell \in \N$.}
	Here,~$\F^\ell \subseteq 2^{\vS}$ consists of all sets $\{(A_1,B_1), (A_2, B_2), (A_3, B_3)\}$ of (not necessarily distinct) oriented bipartitions in~$\vS$ with~$|B_1 \cap B_2 \cap B_3| < \ell$.
	Note that~$\mathcal{T} = \F^1$ here since~$S$ consists of bipartitions of~$V$.

	\subsection{Point sets and functions inducing orientations} \label{subsec:Deciders}
	
	A \emph{weight function} on a set~$V\!$ is a map~$w$ from~$V\!$ to~$\mathbb{R}_{\ge 0}$.
	For a subset~$Z \subseteq V\!$ we write~$w(Z) = \sum_{v \in Z} w(v)$.
	If there exists~$v \in V\!$ with~$w(v) > 0$, then~$w$ is \emph{non-zero}.
	If~$w$ takes values in~$\{0,1\}$ only, then it can equivalently be formulated as an indicator function of the set~$X = X_w = w^{-1}(1)$ in that~$w(Z) = |X \cap Z|$ for every~$Z \subseteq V\!$; we shall use this equivalence freely throughout.
	
	For any weight function~$w$ on~$V$ and any separation~$\{A,B\}$ of~$V\!$, we have $w(B) - w(A) =  w(B \sm A) - w(A \sm B)$, a fact we shall use freely throughout.
	We say that~$w$ \emph{orients} a separation~$\{A,B\}$ of~$V\!$ \emph{as~$(A, B)$} if~$w(A) < w(B)$.
	More generally, $w$~\emph{orients} a set~$S$ of separations of~$V\!$ \emph{as} an orientation~$\tau$ of~$S$ if $w(A) < w(B)$ for all~$(A, B) \in \tau$.
	
	Conversely, let~$S$ be a set of separations of a set~$V$, and let~$\tau$ be an orientation of~$S$.
	If a weight function~$w$ on~$V$ orients~$S$ as~$\tau$, then we say that $w$ \emph{induces}~$\tau$ and all its elements.
	If~$w$ induces~$\tau$ and takes values in~$\{0,1\}$, then we say that the set~$X = w^{-1}(1)$ \emph{induces}~$\tau$.
	
	\medbreak
	
	Let us note some basic observations about orientations induced by functions.	
	First, let~$w$ be a weight function on~$V$ and let~$\lambda > 0$ be a scalar.
	If we \emph{scale~$w$ by~$\lambda$}, i.e.\ if we consider the weight function~$v \mapsto \lambda w(v)$ on~$V$, then this scaled weight function agrees with~$w$ on the sign of~$w(B)-w(A)$ for any separation~$\{A,B\}$ of~$V\!$.
	In particular, if an orientation~$\tau$ of a set~$S$ of separations of~$V$ is induced by a function~$w$, then for any given~$K > 0$ there exists a function~$w_K$ inducing~$\tau$ with~$w_K(B) - w_K(A) \ge K$ for all separations $(A,B) \in \tau$.
	This is because~$w_K$ can be chosen as an appropriate scaling of~$w$, i.e.\ by a factor~$\lambda\ge K / (\min_{(A,B) \in \tau} (w(B) - w(A)))$. 
	
	This fact directly implies that if an orientation~$\tau$ of a set~$S$ of separations of~$V$ is induced by a function~$w$, then there also exists a function inducing~$\tau$ which takes values in~$\Z_{\ge 0}$ instead of~$\R_{\ge 0}$.
	Indeed, there exists~$\varepsilon > 0$ such that~$w(B) - w(A) \geq \varepsilon$ for all~$(A, B) \in \tau$.
	Since~$\mathbb{Q}$ is dense in~$\R$, we can replace for every~$v \in V$ the scalar~$w(v) \in \R_{\ge 0}$ with a rational number~$w'(v) \in \mathbb{Q}_{\ge 0}$ which satisfies $|w(v) - w'(v)| < \varepsilon/|V|$.
	By construction, the resulting weight function~$w'$ on~$V$ still induces~$\tau$.
	Now we can scale~$w'$ by an appropriate~$\lambda \in \N$ to obtain the desired function which takes values in~$\Z_{\ge 0}$ and induces~$\tau$. 
	
	For the final observation in this section, let~$S$ be a set of separations of a set~$V$, and let~$\tau$ be an orientation of~$S$.
	Then a weight function on~$V\!$ induces~$\tau$ as soon as it induces the maximal elements of~$\tau$.
	We include a proof of this observation from~\cite{weighted_deciders_AIC} for the reader's convenience.
	
	\begin{LEM}\label{lem:MaximalSeparations}
		Let~$w$ be a weight function on a set~$V\!$, and let~$(A,B)$ and~$(C,D)$ be separations of~$V\!$ with~$(A,B) \le (C,D)$.
		Then~$w(B) - w(A) \ge w(D) - w(C)$.
		In particular,~$w$ induces~$(A,B)$ if it induces~$(C,D)$.
	\end{LEM}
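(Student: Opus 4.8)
The plan is to reduce everything to a single property of a weight function: its \emph{monotonicity under set inclusion}. Since $w$ takes values in $\R_{\ge 0}$, whenever $X \subseteq Y \subseteq V$ we have $w(X) = \sum_{v \in X} w(v) \le \sum_{v \in Y} w(v) = w(Y)$. This is the only feature of $w$ I expect to use.

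First I would unpack the hypothesis. By the definition of the partial order $\le$ on oriented separations, $(A,B) \le (C,D)$ means precisely $A \subseteq C$ and $B \supseteq D$. Applying monotonicity to the inclusion $A \subseteq C$ gives $w(A) \le w(C)$, and applying it to $D \subseteq B$ gives $w(D) \le w(B)$. Adding these two inequalities yields $w(A) + w(D) \le w(C) + w(B)$, and rearranging the terms gives exactly $w(B) - w(A) \ge w(D) - w(C)$, which is the first assertion of the lemma.

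For the ``in particular'' clause, suppose $w$ induces $(C,D)$, i.e.\ $w(C) < w(D)$, equivalently $w(D) - w(C) > 0$. Combining this with the inequality just established, $w(B) - w(A) \ge w(D) - w(C) > 0$, so $w(A) < w(B)$; by definition this says that $w$ induces $(A,B)$, as desired.

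I do not anticipate a real obstacle here, since the statement is essentially a reformulation of monotonicity. The one point to handle with care is bookkeeping on the direction of the inclusions in the definition of $\le$ — it is $A \subseteq C$ but $D \subseteq B$ — so that the two monotonicity inequalities are oriented consistently and add up to give the claimed inequality rather than its reverse.
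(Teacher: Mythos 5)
Your proof is correct and follows exactly the paper's argument: both deduce $w(A)\le w(C)$ and $w(D)\le w(B)$ from the inclusions $A\subseteq C$, $D\subseteq B$ given by the partial order, using only nonnegativity of the weights, and then rearrange to get the inequality and the ``in particular'' clause. Nothing is missing.
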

	\begin{proof}
		Since~$(A,B) \le (C,D)$, we have~$A \subseteq C$ and~$B \supseteq D$.
		So~$w(A) \le w(C)$ and~$w(B) \ge w(D)$ as~$w$ is a weight function on~$V$.
		This directly implies that~$w(B) - w(A) \ge w(D) - w(C)$. 
		The `in particular'-part then follows immediately.
	\end{proof}

	\section{Resilience and locally induced orientations}\label{sec:Resilience}
	
	In this section we use the novel notion of \emph{resilience} to prove a sufficient criterion for an orientation of a set~$S$ of separations to be induced by a function. 
	After that, we further generalise the concept of resilience towards the notion of being `$k$-locally \hbox{$\ell$-induced}' which allows us to give a characterisation of those orientations of~$S$ which are induced by functions on the ground set~$V\!$.
	We begin this section by giving all the definitions around the concept of resilience.
	
	Let~$S$ be a set of separations of a set~$V$, and let~$k \in \N$.
	An orientation~$\tau$ of~$S$ is \emph{$k$-resilient} if no set of at most~$k$ separations in~$\tau$ has a co-small supremum.
	So~$\tau$ is~$k$-resilient if and only if for all sets~$\sigma \subseteq \tau$ of size at most~$k$, we have that~$\bigcup_{(A,B) \in \sigma} A \ne V$.
	If~$S$ consists only of bipartitions of~$V\!$, then this is equivalent to~$\bigcap_{(A,B) \in \sigma} B \ne \emptyset$ for all sets~$\sigma \subseteq \tau$ of size at most~$k$ since~$(V, \emptyset)$ is the only co-small bipartition of~$V\!$.
	Note that in order to determine whether~$\tau$ is $k$-resilient, it is always enough to consider sets~$\sigma$ of maximal elements of~$\tau$.\looseness=-1
	
	If~$\tau$ is $k$-resilient, then~$\tau$ is also~$k'$-resilient for every~$k' < k$.
	We call~$\tau$ \emph{infinitely resilient} if~$\tau$ is~$k$-resilient for all~$k \in \N$.
	The \emph{resilience} of~$\tau$ is the maximal~$k \in \N$ such that~$\tau$ is~$k$-resilient if such~$k$ exists, $0$~if~$\tau$ is not~$k$-resilient for any~$k \in \N$, and~$\infty$ otherwise.
	
	In addition to the examples on resilience given in the introduction, let us here illustrate the concept once more with a less extreme example.
	Consider the~$(n\times n)$-grid for some~$n \ge 5$, and let~$S$ be the set of all separations of this graph which have order at most~$4$.
	It is easy to see that the orientation~$\tau$ of~$S$ which is induced by the entire vertex set of the grid is a tangle.%
	\COMMENT{}
	Let us show that~$\tau$ is~$\Omega(n^2)$-resilient.
	Every element~$(A,B)$ of~$\tau$ satisfies~$|A|\le 10$; indeed, most satisfy~$|A|\le 5$.%
	\COMMENT{}
	Thus, any set of separations in~$\tau$ with a co-small supremum has at least~$n^2/10$ elements as the~$(n \times n)$-grid has precisely~$n^2$ vertices. 
	
	\medbreak
	
	Why can the notion of resilience help us with constructing a function that induces a given orientation?
	Consider an orientation~$\tau$ of a set~$S$ of separations of a set~$V\!$, and write~$\Max = \Max(\tau)$ for the set of maximal elements of~$\tau$.
	Let us see how resilience that is large in terms of~$|\Max|$ can help us build a $\tau$-inducing function.
	
	Assume that~$\tau$ is~$k$-resilient for some~$k \in \N$, and recall that~$\Max^{(k)}$ denotes the set of~$k$-element subsets of~$\Max$.
	Then the resilience of~$\tau$ implies that for every~$\Max' \in \Max^{(k)}$, there exists some~$v_{\Max'} \in V\!$ that is not contained in the small side of any separation in~$\Max'$; in particular, the set~$\{v_{\Max'}\}$ induces~$\Max'$.
	It seems natural to construct a function that induces~$\Max$ (and thus~$\tau$, by~\cref{lem:MaximalSeparations}) by combining all these local $\Max'$-inducing sets~$\{v_{\Max'}\}$, i.e.\ by assigning to each~${v \in V\!}$ as its weight the number of sets~$\Max' \in \Max^{(k)}$ with~$v_{\Max'} = v$.
	
	It turns out that the weight function~$w$ on~$V$ defined in this way need not in general be a function that induces~$\Max$.
	This is because each~$v_{\Max'}$, while adding its weight to the big sides of the separations in~$\Max'$, can also add weight to the small sides of separations in~$\Max \sm \Max'$.
	But as soon as~$k$ is large enough in that each fixed separation~$(A,B) \in \Max$ is contained in the majority of the sets in~$\Max^{(k)}$, which will happen as soon as~$\Max$ has more~$(k-1)$-subsets to form a~$k$-subset with~$(A,B)$ than it has~$k$-subsets not including~$(A,B)$, the orientation~$(A,B)$ of~$\{A,B\}$ will be induced by the majority of the sets~$\{v_{\Max'}\}$ that locally induce $\Max' \in \Max^{(k)}$.
	We can then deduce from this that~$w$ induces~$\Max$ and hence~$\tau$ by \cref{lem:MaximalSeparations}.
	
	More precisely, we have the following generalisation of~\cref{thm:ResilienceWeightedInformal} to arbitrary orientations~$\tau$:
	
	\begin{THM} \label{thm:ResilienceWeighted}
		Let~$S$ be a set of separations of a set~$V$, and let~$\tau$ be an orientation of~$S$.
		Let~$m$ be the number of maximal elements of~$\tau$.
		If~$\tau$ is \hbox{$k$-resilient} for some integer~$k > \frac{m}{2}$, then~$\tau$ is induced by a function on~$V\!$.
	\end{THM}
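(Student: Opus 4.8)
The plan is to realise the heuristic sketched just above literally: I would build one explicit integer-valued weight function $w$ on $V$ and check directly that it induces the set $\Max = \Max(\tau)$ of maximal elements of $\tau$. Since $\tau$ is finite (as $V$ is), every separation in $\tau$ lies below a maximal one, so by \cref{lem:MaximalSeparations} this suffices.

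First I would dispose of two harmless reductions. If $\tau = \es$ then $m=0$ and the zero function induces $\tau$ vacuously, so I may assume $m \ge 1$ and hence $\Max \ne \es$. Replacing $k$ by $\min(k,m)$ — which preserves $k$-resilience and, since $m \ge 1$, keeps the hypothesis $k > \tfrac m2$ — I may also assume $\tfrac m2 < k \le m$, so that $\Max^{(k)} \ne \es$. Now for each $\Max' \in \Max^{(k)}$ the $k$-resilience of $\tau$ gives $\bigcup_{(A,B)\in\Max'} A \ne V$, so I can fix an element $v_{\Max'} \in V$ lying on the big side of every separation in $\Max'$. Let $w\colon V \to \Z_{\ge 0}$ assign to each $v$ the number of sets $\Max' \in \Max^{(k)}$ with $v_{\Max'} = v$; then $w(Z) = \abs{\{\,\Max' \in \Max^{(k)} : v_{\Max'} \in Z\,\}}$ for every $Z \sub V$, and $w(V) = \binom mk$.

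The verification is a short double count. Fix $(A,B) \in \Max$. If $(A,B) \in \Max'$ then $v_{\Max'} \notin A$, hence $v_{\Max'} \in B$; and if $v_{\Max'} \in A$ then $(A,B) \notin \Max'$. Counting the relevant $k$-subsets of the $m$-set $\Max$ therefore gives $w(B) \ge \binom{m-1}{k-1}$ and $w(A) \le \binom{m-1}{k}$, and $\binom{m-1}{k-1} > \binom{m-1}{k}$ holds precisely because $k > m-k$, i.e.\ because $k > \tfrac m2$. Thus $w(B) > w(A)$ for every $(A,B) \in \Max$, so $w$ induces $\Max$ and hence, by \cref{lem:MaximalSeparations}, all of $\tau$.

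The only real content, I expect, is choosing the right way to combine the local witnesses — weight each vertex by how many sets $\Max'$ pick it — and noticing that $k > \tfrac m2$ is exactly the threshold at which $\binom{m-1}{k-1}$ overtakes $\binom{m-1}{k}$; the discussion preceding the theorem points directly at this. The minor points to watch are the reduction making $\Max^{(k)}$ non-empty and the degenerate cases $\tau = \es$ and $k = m$, where $w$ is simply the indicator of a single vertex in the intersection of all big sides of $\Max$.
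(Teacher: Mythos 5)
Your proof is correct and follows essentially the same route as the paper's primary argument. The paper derives \cref{thm:ResilienceWeighted} as a corollary of the more general \cref{thm:DecidableWeighted} on `$k$-locally $\ell$-induced' orientations; a $k$-resilient orientation is $k$-locally $1$-induced, with local witness $w_{\Max'}$ the indicator of a single point $v_{\Max'} \in V \sm \bigcup_{(A,B)\in\Max'} A$, and the proof of \cref{thm:DecidableWeighted} then builds precisely your weight function $w = \sum_{\Max'\in\Max^{(k)}} w_{\Max'}$ and invokes the same binomial comparison $\ell\binom{m-1}{k} < \binom{m-1}{k-1}$, which for $\ell=1$ is your observation that $\binom{m-1}{k-1}$ overtakes $\binom{m-1}{k}$ exactly when $k > m-k$. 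Your version is a clean direct specialisation rather than a genuinely different proof; the paper also offers a second, independent proof of the same statement via LP (Farkas) duality in \cref{prop:FResilienceWeighted2}, which you did not pursue.
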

	
	We will formally obtain \cref{thm:ResilienceWeighted}, and hence \cref{thm:ResilienceWeightedInformal}, below as a corollary of the more general \cref{thm:DecidableWeighted}.
	But before we do so, let us first show that both \cref{thm:ResilienceWeightedInformal} and \cref{thm:ResilienceWeighted} are optimal with respect to the parameter~$k$ in~$k$-resilience. To show this we exhibit a suitable tangle that is not induced by a function, by using a more general version of the construction from the introduction.
	
	\begin{PROP} \label{prop:ResIsSharp}
		For all~$m, k \in \N$ with~$3 \le k\le \frac{m}{2}$, there exist a set~$V\!$, a sub\-modular order function~$| \cdot |$ on~$U_{bip}(V)$, and an~$m$-tangle~$\tau_{m,k}$ in~$U_{bip}(V)$ that has~$m$ maximal elements and is~$k$-resilient, but which is not induced by any function on~$V\!$.
	\end{PROP}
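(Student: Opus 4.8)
The plan is to generalise the example from the introduction so that every point of the ground set lies on the big side of exactly~$k$, rather than exactly three, of the $m$ maximal separations, and then to equip the resulting universe of bipartitions with a submodular order function under which this orientation is an $m$-tangle whose maximal elements are precisely those $m$ separations. Concretely, I would take $V := [m]^{(k)}$, and for $i \in [m]$ set $B_i := \{\sigma \in V : i \in \sigma\}$, $A_i := V \sm B_i$, $s_i := \{A_i,B_i\}$, and $\tau := \{(A_i,B_i) : i \in [m]\}$. The decisive feature is that each $\sigma \in V$ lies in exactly $k$ of the sets $B_i$. From $3 \le k$ and $m \ge 2k$ one reads off the elementary facts used throughout: any at most $k$ of the $B_i$ have a common element (extend the corresponding index set to some $\sigma \in V$), no $B_i$ contains another, and $B_i \cap B_j \ne \emptyset$ for $i \ne j$. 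Hence the $s_i$ are pairwise distinct, the $(A_i,B_i)$ are pairwise incomparable, and $\tau$ is a $k$-resilient, in particular consistent, orientation of $S := \{s_1,\dots,s_m\}$ with exactly $m$ maximal elements; so $\tau$ is a tangle of $S$.

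That $\tau$, and hence every orientation extending it, is not induced by a function I would derive from the double count in the introduction: if a weight function $w$ on $V$ induced $\tau$, then $w(B_i) > \tfrac12 w(V)$ for each $i$, so $\sum_{i \in [m]} w(B_i) > \tfrac m2 w(V)$, whereas counting each $\sigma$ with its multiplicity $|\{i : \sigma \in B_i\}| = k$ gives $\sum_{i \in [m]} w(B_i) = k\,w(V)$. Since $w$ must be non-zero, $w(V) > 0$, so $k > m/2$, contradicting $k \le m/2$.

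The main new ingredient is the order function. For a bipartition $\{A,B\}$ of $V$ I would put
\[
|A,B| \ := \ \big|\{\, i \in [m] : B_i \nsubseteq A \text{ and } B_i \nsubseteq B \,\}\big|,
\]
the number of sets $B_i$ ``split'' by $\{A,B\}$. This is a sum over $i$ of the functions $X \mapsto$ (indicator that $B_i$ meets both $X$ and $V \sm X$), each submodular by a short case check, so $|\cdot|$ is a submodular, $\Z_{\ge 0}$-valued order function on $U_{bip}(V)$. Since $B_i \cap B_j \ne \emptyset$ and $B_i \nsubseteq B_j$ for $i \ne j$, we get $|s_i| = m-1 < m$, so $s_i \in S_m$; conversely, if $\{A,B\} \in S_m$ then fewer than $m$ of the $B_i$ are split, so some $B_l$ lies wholly inside $A$ or wholly inside $B$, and as any two $B_i$ meet while $A \cap B = \emptyset$, exactly one side of $\{A,B\}$ contains such a $B_l$. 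Let $\tau_{m,k}$ be the orientation of $S_m$ making that side the big side; it orients $s_i$ as $(A_i,B_i)$, so $\tau \subseteq \tau_{m,k}$, whence $\tau_{m,k}$ is not induced by any function. The key point is that every $(A,B) \in \tau_{m,k}$ has $B \supseteq B_l$, hence $A \subseteq A_l$, for some $l$, so $(A,B) \le (A_l,B_l)$; as each $(A_l,B_l)$ lies in $\tau_{m,k}$ and these are pairwise incomparable, they are exactly the maximal elements of $\tau_{m,k}$. Consistency, the profile property, the tangle property, and $k$-resilience then all reduce, via ``$A \subseteq A_l$'', to the fact that any at most $k$ of the $B_i$ intersect: three small sides satisfy $A'_1 \cup A'_2 \cup A'_3 \subseteq A_{l_1} \cup A_{l_2} \cup A_{l_3} = V \sm (B_{l_1} \cap B_{l_2} \cap B_{l_3}) \ne V$, and $\bigcap_{(A,B) \in \sigma} B \supseteq \bigcap_j B_{l_j} \ne \emptyset$ whenever $|\sigma| \le k$.

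I expect the main obstacle to be precisely this choice of order function: it has to be submodular, yet carve out an $S_m$ that is small enough that orienting each of its members towards the side containing some $B_l$ creates no maximal element beyond the $(A_i,B_i)$, while still containing the $s_i$ and forcing $\tau_{m,k}$ to satisfy the tangle axioms. The ``split-count'' function above is engineered so that membership in $S_m$ is controlled by the single condition ``some $B_l$ sits inside one side'', which is exactly what makes both the maximal-element count and all the tangle axioms fall out of the intersection property of the $B_i$. Everything else, including the submodularity check and the elementary facts about the sets $B_i$, is routine.
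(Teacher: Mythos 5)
Your proposal is correct and takes essentially the same route as the paper: you use the identical construction ($V=[m]^{(k)}$, the sets $B_i$, the ``split-count'' order function, and the orientation of $S_m$ by the side containing some $B_l$), and identical arguments for resilience, the tangle property, and the double-count ruling out an inducing function. The only difference is cosmetic: you spell out inline the submodularity check and the non-inducibility double count, whereas the paper delegates these to the cited construction in~\cite{weighted_deciders_AIC}, and you give a few extra lines justifying why the $(A_i,B_i)$ are exactly the maximal elements, which the paper asserts without elaboration.
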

	
	\noindent An example of the tangles in \cref{prop:ResIsSharp} is given by a certain type of hypergraph edge tangles introduced in~\cite{weighted_deciders_AIC}.
	We now describe their construction, and then show that these tangles do indeed have all the desired properties.
	
	\begin{proof}[Proof of \cref{prop:ResIsSharp}]
		Let~$V = [m]^{(k)}$ consist of all~$k$-element subsets of~$[m]$.
		For every~$i \in [m]$ let~${V_i = \{X \in V \mid i \in X\}}$ be the set of all~$k$-element subsets of~$[m]$ containing~$i$.
		We assign to each bipartition~$\{A,B\}$ of~$V$ as its order~$|A,B|$ the number of sets~$V_i$ meeting both~$A$ and~$B$.
		This order function~$| \cdot |$ on~$U_{bip}(V)$ is easily seen to be submodular (see~\cite{weighted_deciders_AIC} for a formal proof).
		
		Every bipartition~$\{A, B\}$ of~$V$ of order less than~$m$ has precisely one side which contains~$V_i$ for some~$i \in [m]$.
		Indeed, one such side exists by the definition of the order function~$|\cdot|$, and since~$V_i \cap V_j \neq \emptyset$ for distinct~$i, j \in [m]$, this side is unique.
		Hence,
		\begin{equation*}
			\tau_{m,k} := \{ (A, B) \mid \{A, B\} \in S_m \text{ and } \exists\ i \in [m] \colon V_i \subseteq B\}
		\end{equation*}
		is a well-defined orientation of~$S_m \subseteq U_{bip}(V)$.
		The maximal elements of~$\tau_{m,k}$ are precisely the~$\vsi = (V \sm V_i, V_i)$ for~$i \in [m]$.
		So in order to see that~$\tau_{m,k}$ is indeed an~$m$-tangle in~$U_{bip}(V)$, it is enough to observe that~$\vsi \vee \vsj \vee \vsell$ is not co-small for any~${1\le i,j, \ell\le m}$.
		But since~$k \ge 3$, we always have~${V_i \cap V_j \cap V_{\ell} \ne \es}$.
		
		As shown in~\cite{weighted_deciders_AIC}, it is immediate from double counting (as in the example from the introduction) that~$\tau_{m,k}$ is not induced by any function on~$V\!$ for~${m \ge 6}$ and~$k \le \frac{m}{2}$.
		So it remains to check that~$\tau_{m,k}$ is~$k$-resilient.
		But this is immediate from the construction:
		for every collection~$\vsione, \dots, \vsik$ of~$k$ distinct maximal elements of~$\tau_{m,k}$, we have $\{i_1, \dots, i_k\} \in \bigcap_{j \in [k]} V_{i_j}$.
		Hence for every collection of at most~$k$ maximal elements of~$\tau_{m,k}$, the intersection of their big sides is non-empty.
	\end{proof}
	
	The~$\tau_{m,k}$ constructed in our proof of~\cref{prop:ResIsSharp} are abstract tangles, but the construction can easily be modified to find~$\F^\ell$-tangles for arbitrary~$\ell > 1$ with the same properties:
	instead of taking the~$k$-element subsets of~$[m]$ as the set~$V\!$, we can take~$V\!$ as the disjoint union of~$\lceil \ell \rceil$-element sets, one for every $k$-element subset of~$[m]$.
	
	\medbreak
	
	Before we proceed towards a proof of \cref{thm:DecidableWeighted}, our generalisation of \cref{thm:ResilienceWeighted}, let us briefly investigate in some more detail the above examples of~$\F^\ell$-tangles that are not induced by functions.
	Note that our construction of \hbox{$\F^\ell$-tangles} does not necessarily work when the value of~$\ell$ is not constant, but large in terms of~$|V|$, e.g.\ of size at least~$\varepsilon\,|V|$ for some constant~$\varepsilon > 0$.%
	\COMMENT{}
	The following proposition shows that there exists a sharp lower bound for those~$\varepsilon > 0$ for which~$\ell\ge \varepsilon\,|V|$ guarantees the existence of a function that induces the~\hbox{$\F^\ell$-tangle}.
	
	\begin{PROP}
		Let~$V\!$ be an~$n$-set, and let~$0<\varepsilon<1$.
		If~$\varepsilon\ge 1/8$, then every~$\F^{\varepsilon n}$-tangle~$\tau$ of a set~$S$ of bipartitions of~$V\!$ is induced by a function on~$V\!$; if~$\varepsilon>1/8$, then~$\tau$ is even induced by a subset of~$V\!$.
		
		Conversely, for every~$\varepsilon< 1/8$ there exist~$n \in \N$ and a set~$S$ of bipartitions of an~$n$-set such that some~$\F^{\varepsilon n}$-tangle of~$S$ is not induced by any function on~$V\!$.
	\end{PROP}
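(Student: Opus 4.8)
\emph{Set-up and plan.} The plan is to convert both directions into one linear‑programming statement and then feed in the $\F^{\varepsilon n}$‑hypothesis through a single moment inequality. Write $n=|V|$ and $\tau=\{(A_s,B_s):s\in S\}$, so $A_s=V\smallsetminus B_s$. A weight function $w$ induces $\tau$ exactly when $w(B_s)>w(A_s)$ for all $s$, i.e.\ (scaling so that $w(V)=1$) when $w(B_s)>\tfrac12$ for all $s$. By linear‑programming duality (a Farkas‑type theorem of the alternative), such a $w$ exists if and only if there is \emph{no} probability distribution $\mu$ on $S$ with $p_v:=\sum_{s:\,v\in B_s}\mu(s)\le\tfrac12$ for every $v\in V$; equivalently, writing $t^\ast(\tau):=\max_w\min_s w(B_s)/w(V)=\min_\mu\max_v p_v$ (von Neumann minimax), $\tau$ is induced by a function iff $t^\ast(\tau)>\tfrac12$. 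The hypothesis enters as follows: for any $\mu$, drawing $s_1,s_2,s_3$ independently from $\mu$ and using that no member of $\F^{\varepsilon n}$ lies in $\tau$,
\[
 \sum_{v\in V}p_v^{\,3}\;=\;\mathbb{E}\big[\,|B_{s_1}\cap B_{s_2}\cap B_{s_3}|\,\big]\;\ge\;\varepsilon n ,
\]
and moreover $\sum_v p_v=\mathbb{E}_\mu|B_s|$.

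\emph{The function statement $(\varepsilon\ge\tfrac18)$.} Suppose for contradiction that some $\mu$ has all $p_v\le\tfrac12$. Then $\varepsilon n\le\sum_v p_v^3\le n(\tfrac12)^3=\tfrac n8\le\varepsilon n$, so every inequality is tight: $p_v=\tfrac12$ for all $v$, $\varepsilon=\tfrac18$, and $|B_{s_1}\cap B_{s_2}\cap B_{s_3}|=\tfrac n8$ for $\mu$‑almost every triple. Taking $s_1=s_2=s_3$ gives $|B_s|=\tfrac n8$ for $\mu$‑a.e.\ $s$, hence $\sum_v p_v=\mathbb{E}_\mu|B_s|=\tfrac n8$; but $p_v=\tfrac12$ for all $n$ vertices gives $\sum_v p_v=\tfrac n2$, forcing $n=0$ — a contradiction (the cases $n=0$ or $S=\varnothing$ being trivial). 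Hence $t^\ast(\tau)>\tfrac12$, and $\tau$ is induced by a function.

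\emph{The subset statement $(\varepsilon>\tfrac18)$.} Here $\sum_v p_v^3\ge\varepsilon n$ together with $p_v\le\max_v p_v=:t$ yields $nt^3\ge\varepsilon n$, so $t\ge\varepsilon^{1/3}$ for \emph{every} $\mu$; thus $t^\ast(\tau)\ge\varepsilon^{1/3}=\tfrac12+\eta$ with $\eta:=\varepsilon^{1/3}-\tfrac12>0$ a fixed constant, so there is a distribution $w$ on $V$ with $w(B_s)-w(A_s)\ge 2\eta$ for all $s$. It then remains to round $w$ to a $0/1$‑valued function, and this rounding is the technical heart. My plan is to split on the size of a smallest big side $B^\ast$ of $\tau$. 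If $|B^\ast|<2\varepsilon n$, then $X:=B^\ast$ already works: for each $s$, the intersection bound applied to the triple $B^\ast,B^\ast,B_s$ gives $|B^\ast\cap B_s|\ge\varepsilon n>|B^\ast|/2$, so more than half of $X$ lies in $B_s$ (and $X\neq\varnothing$ since $|B^\ast|\ge\varepsilon n>0$). At the other extreme, if every big side has size $>n/2$, then $X:=V$ induces $\tau$. (In particular, for $\varepsilon>\tfrac14$ these two cases cover everything.) I expect the remaining regime — a smallest big side of size roughly between $2\varepsilon n$ and $n/2$ — to be the main obstacle: there one must combine the gap $\eta$ with the cubic inequality more carefully, and I would try to do this by reducing, via \cref{lem:MaximalSeparations}, to the maximal separations and then rounding $w$ restricted to $B^\ast$ while controlling the error by a concentration estimate.

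\emph{Sharpness $(\varepsilon<\tfrac18)$.} I would reuse the hypergraph‑edge tangle from the proof of \cref{prop:ResIsSharp}, now with $k=\lfloor r/2\rfloor$: let $V=[r]^{(\lfloor r/2\rfloor)}$, let $V_i=\{X\in V:i\in X\}$, let $S$ consist of the $r$ bipartitions $\{V\smallsetminus V_i,V_i\}$, and let $\tau=\{(V\smallsetminus V_i,V_i):i\in[r]\}$. Any triple of big sides (repetitions allowed) intersects in at least $|V_i\cap V_j\cap V_\ell|=\binom{r-3}{\lfloor r/2\rfloor-3}$ elements, and $\binom{r-3}{\lfloor r/2\rfloor-3}\big/\binom{r}{\lfloor r/2\rfloor}\to\tfrac18$ as $r\to\infty$; so for $r$ large this exceeds $\varepsilon n$ with $n=\binom{r}{\lfloor r/2\rfloor}$, and since big sides pairwise meet, $\tau$ is a consistent orientation of $S$, hence an $\F^{\varepsilon n}$‑tangle. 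But the uniform distribution $\mu$ on $S$ has $p_v=\lfloor r/2\rfloor/r\le\tfrac12$ for every $v\in V$, so by the set‑up $\tau$ is induced by no function on $V$. (This simultaneously shows the thresholds $\tfrac18$ in the other two clauses cannot be lowered.)
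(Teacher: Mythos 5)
Your proof of the first clause ($\varepsilon\ge 1/8$, a function exists) is correct and takes a genuinely different route from the paper. You reformulate inducibility via von Neumann minimax/Farkas duality as $t^\ast(\tau)>\tfrac12$, and the single third-moment identity $\sum_v p_v^3=\mathbb E\bigl[\,|B_{s_1}\cap B_{s_2}\cap B_{s_3}|\,\bigr]\ge\varepsilon n$ then forces a contradiction from any $\mu$ with all $p_v\le\tfrac12$. This is arguably cleaner than the paper's argument for the boundary case $\varepsilon=\tfrac18$, which there requires a separate analysis with a hand-built counting weight function. Your sharpness construction ($\varepsilon<1/8$) is essentially the paper's, restricted to the $r$ maximal bipartitions, and is correct; the observation that the uniform $\mu$ has all $p_v\le\tfrac12$ is a nice use of the same duality to certify non-inducibility.

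The subset statement ($\varepsilon>1/8$) has a genuine gap, which you flag yourself: the two cases you handle — smallest big side of size $<2\varepsilon n$, and all big sides of size $>n/2$ — leave the range $2\varepsilon n\le|B^\ast|\le n/2$ uncovered, which is nonempty whenever $\varepsilon\le 1/4$. Importantly, the missing step does not require LP rounding or a concentration estimate; the paper closes it with a short greedy intersection argument that you should substitute for your ``technical heart''. Concretely: if $V$ does not induce $\tau$, pick $(A_1,B_1)\in\tau$ on which it fails, so $|B_1|\le n/2$. If $B_1$ does not induce $\tau$, pick $(A_2,B_2)\in\tau$ with $|B_1\cap B_2|\le|B_1\cap A_2|$, whence $|B_1\cap B_2|\le n/4$. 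If $B_1\cap B_2$ still does not induce $\tau$, a third separation $(A_3,B_3)$ would give $|B_1\cap B_2\cap B_3|\le|B_1\cap B_2|/2\le n/8<\varepsilon n$, contradicting the $\F^{\varepsilon n}$-tangle condition. Hence one of $V$, $B_1$, $B_1\cap B_2$ is the required inducing set. Your minimax framework is not needed at all for this clause; replacing the ``split on the smallest big side'' idea with this iterated halving is both shorter and complete.
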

	\begin{proof}
		Let~$\tau$ be an~$\F^\ell$-tangle of a set~$S$ of bipartitions of a set~$V\!$ satisfying~${\ell\ge |V|/8}$.
		If all of~$V\!$ induces~$\tau$, then we are done; so suppose not.
		Then there exists~$(A_1,B_1) \in \tau$ with~$|B_1|\le |V|/2$.
		Again we are done if~$B_1$ induces~$\tau$.
		If this is not the case, then there is some~$(A_2,B_2) \in \tau$ with~$|B_1\cap A_2|\ge |B_1\cap B_2|$; in particular, we have~$|B_1\cap B_2|\le |V|/4$.
		
		It turns out that if~$\ell>|V|/8$, then~$B_1 \cap B_2$ has to induce~$\tau$, since otherwise there exists some~$(A_3,B_3) \in \tau$ such that
		\begin{equation*}
			|(B_1\cap B_2)\cap B_3|\le |(B_1\cap B_2)\cap A_3|.
		\end{equation*}
		This implies~$|(B_1\cap B_2)\cap B_3|\le |V|/8$, which contradicts the fact that~$\tau$ is an~$\F^\ell$-tangle. 
		
		If~$\ell=|V|/8$, then the same arguments as above produce a $\tau$-inducing set if at least one of the occurring inequalities is strict.
		So suppose that all the above inequalities are satisfied with equality.
		In particular, every~${(A,B) \in \tau}$ satisfies~$|A|\le|B|$.
		
		Using similar reasoning as above, we can obtain a function that induces~$\tau$.
		It will be enough to find a weight function~$w$ on~$V\!$ that induces the set~$\tau'\subseteq \tau$ consisting of all $(A,B)\in \tau$ with~$|A|=|B|$. 
		For a given function~$w$ inducing~$\tau'$, we can obtain a function inducing~$\tau$ by adding large enough constant weight to all the vertices in~$V\!$.
		
		Suppose there are~$(A,B), (C,D) \in \tau'$ with $|B\cap C|>|B\cap D|$.
		This yields $|B\cap D|<|V|/4$, which in turn implies the existence of a set inducing~$\tau$, by the same arguments as above. 
		Consequently, for every~$(A,B), (C,D) \in \tau'$, we have $|B\cap C|\le |B\cap D|$.
		
		Hence, the weight function~$w$ defined by counting for every~$v\in V\!$ the number of~$(A,B)\in \tau'$ with~$v\in B$ is a function inducing~$\tau'$: 
		given~${(C,D)\in \tau'}$, we have ${w(C)=\sum_{(A,B)\in \tau'}|B\cap C|}$ and ${w(D)=\sum_{(A,B)\in \tau'}|B\cap D|}$.
		As above, we have~${|B\cap C|\le|B\cap D|}$ for every~$(A,B)\in \tau'$, and as~$\tau$ is an~$\F^{\ell}$-tangle, we clearly have~$D \neq \es$ and hence~${|D\cap C|<|D\cap D|}$.
		This yields~$w(C)<w(D)$, and thus, $w$~induces~$\tau$.
		
		For the second part of the proposition, consider~$V$ and the~$m$-tangle~$\tau_{m,k}$ in~$U_{bip}(V)$ as constructed in our proof of \cref{prop:ResIsSharp} for some~${m\ge 2k\ge 6}$.
		Then for any three maximal elements of~$\tau_{m,k}$, their intersection contains exactly~$\binom{m-3}{k-3}$ elements of~$V\!$.
		In particular,~$\tau_{m,k}$ is an~$\F^{\ell}$-tangle for all~${\ell<\binom{m-3}{k-3}}$.
		
		Recall that~$|V|=\binom{m}{k}$.
		For~${k=m/2}$ we have~${\lim_{m\to \infty} \binom{m-3}{k-3}/\binom{m}{k}=1/8}$.
		Thus, we find for any~$\varepsilon<1/8$ some~$n = \binom{m}{k} \in \N$ such that the tangle~$\tau_{m,k}$ from \cref{prop:ResIsSharp} is an~$\F^{\varepsilon n}$-tangle of a set of bipartitions of an $n$-set that is not induced by any function on~$V\!$.
	\end{proof}
	
	Back to~\cref{thm:ResilienceWeighted}, recall that this theorem is sharp in terms of the parameter~$k$ in~$k$-resilience as shown by~\cref{prop:ResIsSharp}.
	However, the converse of \cref{thm:ResilienceWeighted} fails, i.e.\ not even every tangle induced by a set of points has high resilience compared to the number of its maximal elements.
	
	\begin{EX} \label{ex:DecidableNotResilient}
		Let~$V$ be a set of size~$n \ge 4$, and let~$S$ be the set of all bipartitions of~$V$ that have a side of size less than~$n/3$.%
		\COMMENT{}
		Let~$\tau$ be the orientation of~$S$ which orients every~$s \in S$ in such a way that its big side contains more elements of~$V$ than its small side.
		In particular, $V\!$~induces~$\tau$.	
		
		This orientation~$\tau$ of~$S$ is a tangle, since no three big sides of bipartitions in~$\tau$ have empty intersection.
		However, four big sides can, so the supremum of four bipartitions in~$\tau$ can be co-small.
		Thus,~$\tau$ has resilience~$3$.
		
		Now~$\tau$ has~$m = \binom{n}{\lceil (n/3)-1 \rceil}$ maximal elements, namely those bipartitions of~$V$ whose small side has maximum size.
		In particular, the resilience of~$\tau$ is low compared with~$m$, although~$\tau$ is induced by a function on~$V\!$, and even by the set~$V\!$. \looseness=-1
	\end{EX}
	
	It turns out that we can generalise the notion of resilience in a way which includes the tangle from the previous \cref{ex:DecidableNotResilient} without invalidating \cref{thm:ResilienceWeighted}.
	In fact, our more general notion leads to a more general result, \cref{thm:DecidableWeighted}, which actually characterises the orientations induced by functions.
	
	Let~$\tau$ be an orientation of a set of separations of a set~$V$, and let $\Max = \Max(\tau)$ be the set of maximal elements of~$\tau$.
	Our more general notion of resilience is based on our earlier observation that, if~$\tau$ is~$k$-resilient, then this provides for every $\Max' \in \Max^{(k)}$ a~singleton set~$\{v_{\Max'}\}$ that induces~$\Max'$.
	In the following definition we ask, instead of $k$-resilience, that for every $\Max' \in \Max^{(k)}$ there exists a function~$w_{\Max'}$ which induces~$\Max'$ and is not too badly wrong on the separations in~$\Max \sm \Max'$.
	
	More precisely, an orientation~$\tau$ of a set~$S$ of separations of a set~$V\!$ is \emph{$k$-locally\penalty-200\ $\ell$-induced} for given~$k \in \N$ and~$\ell \ge 0$ if for every set~$\Max' \subseteq \tau$ of size~$|\Max'| \le k$, there is a weight function~$w_{\Max'}$ on~$V\!$ satisfying
	\begin{enumerate}[\rm (i)]
		\item $\forall\ (A,B) \in \Max' \colon w_{\Max'}(B) - w_{\Max'}(A) \ge 1$; \label{LocDec1}
		\item $\forall\ (A,B) \in \tau \colon w_{\Max'}(A) - w_{\Max'}(B) \le \ell$. \label{LocDec2}
	\end{enumerate}
	
	Observe that by \cref{lem:MaximalSeparations} one needs only consider sets~$\Max' \subseteq \Max$ in the above definition where~$\Max = \Max(\tau)$ is the set of maximal elements of~$\tau$.
	In addition, we can equivalently strengthen the condition of~$|\Max'| \le k$ to~$|\Max'| = k$ (i.e.\ $\Max' \in \Max^{(k)}$), as long as~$\tau$ has at least~$k$ maximal elements.%
	\COMMENT{}
	
	The above definition is indeed a generalisation of our earlier notion of resilience, since a~$k$-resilient orientation~$\tau$ of~$S$ is~$k$-locally $1$-induced:
	for~${\Max' \subseteq \tau}$ with~$|\Max'| \le k$, take~$w_{\Max'}$ assigning~$1$ to a single element in~$V\sm\bigcup_{(A,B)\in \Max'} A$, which is non-empty since~$\tau$ is~$k$-resilient, and~$0$ to all other elements in~$V\!$.%
	\COMMENT{}
	
	If~$\tau$ is induced by a function on~$V\!$, then~$\tau$ is~$k$-locally $\ell$-induced for all~${k \in \N}$ and~$\ell \ge 0$.
	Indeed, as described in~\cref{subsec:Deciders}, there exists a function that induces~$\tau$ and satisfies~\eqref{LocDec1} for all separations in~$\tau$, and any function inducing~$\tau$ clearly satisfies~\eqref{LocDec2}.
	In particular, the tangle in~\cref{ex:DecidableNotResilient} is $k$-locally $\ell$-induced for all~$k \in \N$ and~$\ell \ge 0$.
	
	Here, then, is our generalisation of \cref{thm:ResilienceWeighted}:
	\begin{THM} \label{thm:DecidableWeighted}
		Let~$\tau$ be an orientation of a set~$S$ of separations of a set~$V\!$, and suppose that~$\tau$ has~$m$ maximal elements.
		Then~$\tau$ is induced by a function on~$V\!$ if and only if it is~$k$-locally $\ell$-induced for some $k \in \N$ and $\ell > 0$ with $k > \frac{m}{1+1/\ell}$.
	\end{THM}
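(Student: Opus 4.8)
The plan is to prove the equivalence in two directions, with essentially all the work in the ``if'' direction. The ``only if'' direction is immediate from the observation recorded just before the theorem: if $\tau$ is induced by a function on $V\!$, then $\tau$ is $k$-locally $\ell$-induced for \emph{every} $k \in \N$ and $\ell \ge 0$, and in particular for $\ell = 1$ together with any $k > m/2 = m/(1+1/\ell)$, for instance $k = m+1$. (If $m = 0$ then $\tau = \es$ and there is nothing to prove, so I assume $m \ge 1$ henceforth.)

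For the ``if'' direction, suppose $\tau$ is $k$-locally $\ell$-induced for some $k$ and $\ell > 0$ with $k > m/(1+1/\ell)$. First I would reduce to the case $1 \le k \le m$: replacing $k$ by $\min(k,m)$ only shrinks the family of sets $\Max'$ that must be handled, so it preserves being $k$-locally $\ell$-induced, and it preserves the inequality $k > m/(1+1/\ell)$ because $m > m/(1+1/\ell)$. Write $\Max = \Max(\tau)$, so $|\Max| = m$. By the definition of $k$-local $\ell$-inducedness (and the remark that only subsets of $\Max$ matter), for each $\Max' \in \Max^{(k)}$ we are handed a weight function $w_{\Max'}$ on $V\!$ with $w_{\Max'}(B) - w_{\Max'}(A) \ge 1$ for all $(A,B) \in \Max'$ and $w_{\Max'}(A) - w_{\Max'}(B) \le \ell$ for all $(A,B) \in \tau$.

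The construction then carries out the averaging heuristic sketched in the run-up to the theorem: set $w := \sum_{\Max' \in \Max^{(k)}} w_{\Max'}$, which is again a weight function on $V\!$, and show that $w$ induces every maximal element of $\tau$; by \cref{lem:MaximalSeparations}, together with the fact that $\tau$ is finite so each of its elements lies below a maximal one, this yields that $w$ induces all of $\tau$. To see that $w$ induces $\Max$, fix $(A,B) \in \Max$ and split $w(B) - w(A) = \sum_{\Max' \in \Max^{(k)}}\bigl(w_{\Max'}(B) - w_{\Max'}(A)\bigr)$ according to whether $(A,B) \in \Max'$: the $\binom{m-1}{k-1}$ summands with $(A,B) \in \Max'$ are each at least $1$, while the $\binom{m-1}{k}$ summands with $(A,B) \notin \Max'$ are each at least $-\ell$, so $w(B) - w(A) \ge \binom{m-1}{k-1} - \ell\binom{m-1}{k}$. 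The last step is to check this is positive: when $k = m$ the second term vanishes and the bound is $1$; when $k < m$ one has $\binom{m-1}{k-1}/\binom{m-1}{k} = k/(m-k)$, and $k > m/(1+1/\ell) = \ell m/(\ell+1)$ rearranges precisely to $k/(m-k) > \ell$, hence $\binom{m-1}{k-1} > \ell\binom{m-1}{k}$.

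The one genuinely substantive point is the identity $\binom{m-1}{k-1}/\binom{m-1}{k} = k/(m-k)$ and the recognition that the stated threshold on $k$ is \emph{exactly} what makes each maximal separation of $\tau$ lie in enough sets $\Max' \in \Max^{(k)}$ to outweigh, in the averaged $w$, the error $\ell$ it may accumulate from the sets not containing it. I do not anticipate a real obstacle beyond keeping the inequalities pointing the right way and treating the boundary cases $k = m$ and $m = 0$. Afterwards it is worth remarking that this recovers \cref{thm:ResilienceWeighted} and hence \cref{thm:ResilienceWeightedInformal}: a $k$-resilient $\tau$ is $k$-locally $1$-induced, and for $\ell = 1$ the hypothesis $k > m/(1+1/\ell)$ is exactly $k > m/2$.
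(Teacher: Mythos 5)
Your proposal is correct and follows essentially the same route as the paper's proof: reduce to inducing the maximal elements via Lemma~\ref{lem:MaximalSeparations}, average the local weight functions $w_{\Max'}$ over all $\Max' \in \Max^{(k)}$, and compare $\binom{m-1}{k-1}$ against $\ell\binom{m-1}{k}$ using the ratio $k/(m-k)$. The only (cosmetic) differences are that you fold the boundary case into the main estimate by reducing to $k \le m$ and noting $\binom{m-1}{m}=0$, whereas the paper disposes of $k \ge m$ as a separate trivial case, and you flag $m=0$ explicitly.
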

	
	\noindent Since every~$k$-resilient orientation~$\tau$ is $k$-locally $1$-induced, \cref{thm:ResilienceWeighted}, and hence~\cref{thm:ResilienceWeightedInformal} as well, are direct corollaries of \cref{thm:DecidableWeighted}.
	
	\begin{proof}[Proof of \cref{thm:DecidableWeighted}]
		We have seen above that if~$\tau$ is induced by a function on~$V\!$, then it is $k$-locally \hbox{$\ell$-induced} for every $k \in \N$ and $\ell \ge 0$.
		In particular, $\tau$~is $m$-locally $\ell$-induced for every~$\ell > 0$, and we have~$m > \frac{m}{1+1/\ell}$ in this case.
		
		For the converse, recall that by~\cref{lem:MaximalSeparations} it is enough to show that the set ${\Max = \Max(\tau)}$ of maximal elements of~$\tau$ is induced by some function on~$V\!$. 
		If~${k\ge m}$ the statement is true immediately; so suppose for the following that~${k<m}$.%
		\COMMENT{}
		We construct a function~$w$ inducing~$\Max$ as follows: 
		for every~${\Max' \in \Max^{(k)}}$ let~$w_{\Max'}$ be a weight function on~$V$ as in the definition of `$k$-locally $\ell$-induced'. 
		Then we combine all these~$w_{\Max'}$ to define the weight function~$w$ on~$V$ as
		\begin{equation*}
			w: V \rightarrow \mathbb{R}_{\ge 0}, w(v) = \sum_{\Max' \in \Max^{(k)}} w_{\Max'}(v).
		\end{equation*}
		We show that $w$ induces~$\Max$, as desired.
		
		For an arbitrary separation~$\vs = (A,B) \in \Max$, let~$\Max^{(k)}_{\vs}$ consist of all~${\Max' \in \Max^{(k)}}$ containing~$\vs$.
		Then~$\Max^{(k)}_{\vs}$ has size~$\binom{m-1}{k-1}$ since~$\vs$ is contained in~$\binom{m-1}{k-1}$ many sets~${\Max' \in \Max^{(k)}}$.
		Similarly,~$\Max^{(k)} \sm \Max^{(k)}_{\vs}$ has size~$\binom{m-1}{k}$.
		Therefore, \eqref{LocDec1}~in the definition of `$k$-locally $\ell$-induced' yields
		\begin{equation*}
			\sum_{\Max' \in \Max^{(k)}_{\vs}} w_{\Max'}(A) \le \sum_{\Max' \in \Max^{(k)}_{\vs}} (w_{\Max'}(B) - 1) = \sum_{\Max' \in \Max^{(k)}_{\vs}} w_{\Max'}(B) - \binom{m-1}{k-1}.
		\end{equation*}
		Similarly, we obtain by~\eqref{LocDec2} that
		\begin{equation*}
			\sum_{\Max' \in \Max^{(k)} \sm \Max^{(k)}_{\vs}} w_{\Max'}(A) \le \sum_{\Max' \in \Max^{(k)} \sm \Max^{(k)}_{\vs}} (w_{\Max'}(B)\ +\ \ell) = \sum_{\Max' \in \Max^{(k)} \sm \Max^{(k)}_{\vs}} w_{\Max'}(B)\ +\ \ell\ \cdot\ \binom{m-1}{k}.
		\end{equation*}
		These inequalities combine to	
		\begin{equation*}
			\begin{split}
				w(A) & = \sum_{\Max' \in \Max^{(k)}} w_{\Max'}(A) \\
				& = \sum_{\Max' \in \Max^{(k)}_{\vs}} w_{\Max'}(A) + \sum_{\Max' \in \Max^{(k)} \sm \Max^{(k)}_{\vs}} w_{\Max'}(A) \\
				& \le \sum_{\Max' \in \Max^{(k)}_{\vs}} w_{\Max'}(B) - \binom{m-1}{k-1} + \sum_{\Max' \in \Max^{(k)} \sm \Max^{(k)}_{\vs}} w_{\Max'}(B) + \ell \cdot \binom{m-1}{k} \\
				& = w(B) - \binom{m-1}{k-1} + \ell \cdot \binom{m-1}{k}.
			\end{split}
		\end{equation*}
		Now since~$k > \frac{m}{1+1/\ell}$ and~$k<m$, we have that~$\ell<\frac{k}{m-k}$ and thus 
		\begin{equation*}
			\ell \cdot \binom{m-1}{k} < \binom{m-1}{k-1},
		\end{equation*}
		as~$\binom{m-1}{k}=\frac{(m-1)!}{k!(m-1-k)!}$ and~$\binom{m-1}{k-1}=\frac{(m-1)!}{(k-1)!(m-k)!}$ differ precisely by the factor~$\frac{k}{m-k}$.
		This then implies $w(A) < w(B)$, so $w$ induces~$\vs = (A,B)$.
		Thus, since~$\vs \in \Max$ was arbitrarily chosen, $w$~induces~$\Max$ and hence~$\tau$, by \cref{lem:MaximalSeparations}.
	\end{proof}

	\section{Orientations and duality of set separations} \label{sec:Duality}
	
	In this section we present a second characterisation of those orientations of a set of separations that are induced by a function on the ground set: a~character\-isation in terms of a duality between separation systems.
	As an unexpected corollary, we obtain an independent second proof of \cref{thm:ResilienceWeighted}.
	
	The duality of separation systems, which was introduced in~\cite{TanglesSocial} and first studied in~\cite{ASSduality, HypergraphHomology}, is defined for set separations as follows.
	Let~$S$ be a set of separations of a set~$V\!$, and let~$\sigma$ be an orientation of~$S$.
	For every~$v \in V\!$, the~sets\looseness=-1
	\begin{align*}
		C_\sigma(v) &:= \{\{A, B\} \in S \mid (A, B) \in \sigma, v \in A\} \text{ and}\\
		D_\sigma(v) &:= \{\{A, B\} \in S \mid (A, B) \in \sigma, v \in B\}
	\end{align*}
	form the sides of the separation~$\{C_\sigma(v), D_\sigma(v)\}$ of~$S$.
	The map ${\varphi_\sigma: V \to U(S)}$ then associates with~$v \in V$ the separation~$\varphi_\sigma(v) := \{C_\sigma(v), D_\sigma(v)\}$ of~$S$.
	We write $V_\sigma := \varphi_\sigma(V)$ for the \emph{dual set of separations of~$S$ with respect to~$\sigma$}.%
	\COMMENT{}
	
	Suppose now that~$\varphi_\sigma$ is injective.\footnote{The map~$\varphi_\sigma$ is injective if and only if for every two distinct elements~$v, v' \in V$, there exist two separations in~$S$ for one of which~$v$ and~$v'$ are contained in the same side and for the other in different sides. If this is not the case for~$v, v' \in V$, then $v$ and~$v'$ `carry the same information' about how the separations in~$S$ separate~$V$ and can hence be seen as redundant.}
	Then the dual set~$V_\sigma$ of separations of~$S$ with respect to~$\sigma$ has a natural \emph{default orientation~$\tau_\sigma$} which orients every ${\varphi_\sigma(v) = \{C_\sigma(v), D_\sigma(v)\} \in V_\sigma}$ as~$(C_\sigma(v), D_\sigma(v))$, i.e.\ its big side contains those $\{A, B\} \in S$ with $(A, B) \in \sigma$ and~$v \in B$.
	It is easy to see that the dual set of separations of~$V_\sigma$ with respect to~$\tau_\sigma$ is again~$S$ with default orientation~$\sigma$.
	In simple terms, `dualising the dual yields the primal'~\cite{ASSduality}.
	
	If~$\varphi_\sigma$ is injective, we can ask whether the existence of a function on~$V\!$ that induces the orientation~$\sigma$ of~$S$ relates to any property of the default orientation~$\tau_\sigma$ of the dual set~$V_\sigma$ of separations of~$S$ with respect to~$\sigma$.
	It turns out that it does, and it does so in an intriguing way: $\sigma$~is induced by a function on~$V\!$ if and only if every non-zero weight function~$w'$ on~$S$ induces some separation in~$\tau_\sigma$.
	More generally, we have the following theorem:
	
	\begin{THM} \label{thm:WeightedLPDuality}
		Let~$S$ be a set of separations of a set~$V\!$, and let~$\sigma$ be an orientation~of~$S$.
		Then the following two assertions are equivalent:
		\begin{enumerate}[\rm (i)]
			\item There exists a function on~$V\!$ that induces~$\sigma$. \label{DThm1}
			\item For every non-zero weight function~$w'$ on~$S$, there exists $v \in V$ such that $w'(C_\sigma(v)) < w'(D_\sigma(v))$.\label{DThm2}
		\end{enumerate}
		In particular, if~$\varphi_\sigma$ is injective and~$\tau_\sigma$ is the default orientation of the dual set~$V_\sigma$ of separations of~$S$ with respect to~$\sigma$, then~$\sigma$ is induced by a function on~$V\!$ if and only if every non-zero weight function~$w'$ on~$S$ induces a separation in~$\tau_\sigma$. \looseness=-1
	\end{THM}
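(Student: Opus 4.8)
The name of the result already points to the method: I would read the equivalence of~(i) and~(ii) as an instance of linear‑programming duality, or more precisely of a theorem of the alternative (Farkas' lemma), after rephrasing both conditions as statements about one $\{-1,0,1\}$‑matrix. Note first that everything lives in finite dimension: $V$ is finite, hence so is $U(V)$, hence so is~$S$.

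For the set-up, for each $s\in S$ let $(A_s,B_s)$ be the orientation of $s$ lying in~$\sigma$, and define a matrix $M$ with rows indexed by $S$ and columns by $V$ by setting $M_{s,v}:=\mathbf{1}[v\in B_s]-\mathbf{1}[v\in A_s]$; thus $M_{s,v}=0$ exactly when $v\in A_s\cap B_s$, i.e.\ on the separator of~$s$. For a weight function $w$ on $V$, regarded as a vector in $\R_{\ge 0}^{V}$, we then have $(Mw)_s=w(B_s)-w(A_s)$ for every $s\in S$. Hence $w$ induces $\sigma$ precisely when $Mw$ is strictly positive in every coordinate; and by the scaling observation in \cref{subsec:Deciders}, such a $w$ exists if and only if there is some $w\in\R_{\ge 0}^{V}$ with $Mw\ge\mathbf{1}$ (this last step uses finiteness of~$S$, so that a coordinatewise-positive $Mw$ has a positive minimum entry). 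So (i) is equivalent to the solvability of the linear system $\{\,Mw\ge\mathbf{1},\ w\ge 0\,\}$.

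Next comes the duality step. By the standard theorem of the alternative, the system $\{\,Mw\ge\mathbf{1},\ w\ge 0\,\}$ is infeasible if and only if there is a non-zero vector $y\in\R_{\ge 0}^{S}$ with $M^{\mathsf{T}}y\le 0$: writing $Mw\ge\mathbf{1}$ as $-Mw\le-\mathbf{1}$, feasibility over $w\ge 0$ holds iff every $y\ge 0$ with $-M^{\mathsf{T}}y\ge 0$ satisfies $-\mathbf{1}^{\mathsf{T}}y\ge 0$, and together with $y\ge 0$ the latter forces $y=0$. Now a vector $y\in\R_{\ge 0}^{S}$ is exactly a weight function $w'$ on $S$, and for each $v\in V$ the $v$-th entry of $M^{\mathsf{T}}y$ equals $\sum_{s\in S}M_{s,v}\,y_s=w'(D_\sigma(v))-w'(C_\sigma(v))$ — here the separator rows contribute $0$, so this difference is unaffected by the possible overlap of $C_\sigma(v)$ and $D_\sigma(v)$. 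Hence the dual condition says precisely: there is a non-zero weight function $w'$ on $S$ with $w'(C_\sigma(v))\ge w'(D_\sigma(v))$ for all $v\in V$. Thus (i) holds if and only if no such $w'$ exists, which is exactly the statement~(ii).

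The `in particular' clause then falls out of the definitions: by definition of the default orientation we have $(C_\sigma(v),D_\sigma(v))\in\tau_\sigma$ for every $v\in V$, so a weight function $w'$ on $S$ induces \emph{some} separation of $\tau_\sigma$ exactly when $w'(C_\sigma(v))<w'(D_\sigma(v))$ for some $v\in V$; thus (ii) literally asserts that every non-zero weight function on $S$ induces a separation in~$\tau_\sigma$. I do not expect a genuine obstacle in this proof. The points that need care are choosing the right orientation of the theorem of the alternative, the passage from the strict inequalities $Mw>0$ to $Mw\ge\mathbf{1}$ via scaling (legitimate because $S$ is finite), and keeping track of the fact that separators contribute nothing throughout. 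The real content is just that LP duality, transported through the dual-separation construction $\varphi_\sigma$, becomes the stated combinatorial dichotomy.
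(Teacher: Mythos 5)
Your proof is correct and is essentially identical to the paper's: both encode the oriented separations in a $\{-1,0,1\}$-matrix (your $M$ is the transpose of their $Q$), reduce inducibility to solvability of $\{Q^{\mathsf T}x\ge\mathbf 1,\ x\ge 0\}$ via the scaling observation, and apply the same variant of Farkas's Lemma to obtain the non-zero $w'$ on $S$ with the opposite sign condition. The only cosmetic difference is that the paper cites the alternative in the form ``$\exists x\ge 0:\ Q^{\mathsf T}x\ge b$ xor $\exists y\ge 0:\ Qy\le 0,\ b^{\mathsf T}y>0$'' with $b=\mathbf 1$, whereas you rederive the equivalent statement from the affine Farkas lemma.
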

	
	\noindent The proof of \cref{thm:WeightedLPDuality} will be done purely in terms of linear algebra and can be followed without any further knowledge about our particular duality of separation systems described above.
	The key tool in our proof will be the following variant of Farkas's Lemma (see, e.g., \cite[6.~Theorem]{FarkasLemmaEquiv}\footnote{Our version of Farkas's Lemma follows from \cite[6.~Theorem]{FarkasLemmaEquiv} by applying their theorem to~$A=Q^T$ and~$-b$ instead of~$b$.}).
	
	\newpage
	
	\begin{LEM}[Farkas's Lemma] \label{lem:Farkas}
		Let~$Q \in \R^{n \times \ell}$ and~$b \in \R^\ell$.
		Then exactly one of the following two assertions holds:
		\begin{enumerate}[\rm (i)]
			\item There exists~$x \in \R_{\ge 0}^n$ with~$Q^T x \ge b$.\label{Farkas1}
			\item There exists~$y \in \R_{\ge 0}^\ell$ with~$Qy \le 0$ and~$b^T y > 0$.\label{Farkas2} 
		\end{enumerate}
	\end{LEM}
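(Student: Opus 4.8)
The plan is to treat the two directions separately. The first --- that (i) and (ii) cannot hold simultaneously --- is a one-line computation: if $x \in \R^n_{\ge 0}$ satisfies $Q^T x \ge b$ and $y \in \R^\ell_{\ge 0}$ satisfies $Qy \le 0$ and $b^T y > 0$, then left-multiplying $Q^T x \ge b$ by the nonnegative vector~$y^T$ gives $y^T Q^T x \ge y^T b = b^T y > 0$, while $y^T Q^T x = (Qy)^T x \le 0$ since $Qy \le 0$ and $x \ge 0$, a contradiction. So at most one of the two assertions can hold, and it remains to show that the failure of~(i) forces~(ii).

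For this direction I would argue geometrically in~$\R^\ell$. Consider the set
\[
  C := \{\, Q^T x - s : x \in \R^n_{\ge 0},\ s \in \R^\ell_{\ge 0} \,\} \subseteq \R^\ell ,
\]
and note that~(i) holds precisely when $b \in C$, since $Q^T x \ge b$ with $x \ge 0$ is the same as $b = Q^T x - s$ for some $s \ge 0$. Now $C$ is a convex cone generated by the finitely many vectors $Q^T e_1, \dots, Q^T e_n$ (the columns of~$Q^T$) together with $-e_1, \dots, -e_\ell$; a finitely generated convex cone is a closed subset of~$\R^\ell$, which I would take as a known fact (it can be proved via Carath\'eodory's theorem for cones together with a compactness argument). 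If~(i) fails then $b \notin C$, so the strict separation theorem for a point and a disjoint closed convex set yields $y \in \R^\ell$ and $\alpha \in \R$ with $\langle y, c\rangle \le \alpha < \langle y, b\rangle$ for all $c \in C$. Since $0 \in C$ and $C$ is closed under multiplication by positive scalars, $\sup_{c \in C} \langle y, c\rangle$ is either $0$ or~$+\infty$; being bounded above by~$\alpha$, it equals~$0$, so $\langle y, c\rangle \le 0$ for every $c \in C$ while $\langle y, b\rangle > 0$. Testing $y$ against the generators then yields~(ii): from $c = -e_i \in C$ we get $-y_i \le 0$, hence $y \ge 0$; from $c = Q^T e_j \in C$ we get $(Qy)_j = \langle y, Q^T e_j\rangle \le 0$ for each $j \in [n]$, hence $Qy \le 0$; and $b^T y = \langle y, b\rangle > 0$.

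The main obstacle is the closedness of finitely generated cones; everything else is routine once that is available. If one wanted a proof that avoids both this fact and the separating-hyperplane theorem, I would instead apply Fourier--Motzkin elimination to the system $Q^T x \ge b$,~$x \ge 0$, eliminating $x_1, \dots, x_n$ one variable at a time: the system is infeasible exactly when elimination produces a violated variable-free inequality, and tracing back the nonnegative multipliers placed on the rows of $Q^T x \ge b$ yields a vector~$y \ge 0$ with $Qy \le 0$ and $b^T y > 0$, which is~(ii). Alternatively, as the footnote to the statement indicates, the lemma is immediate from \cite[6.~Theorem]{FarkasLemmaEquiv} on applying that result with $A = Q^T$ and $-b$ in place of~$b$.
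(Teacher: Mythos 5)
Your proposal is correct, but note that the paper does not prove \cref{lem:Farkas} at all: it quotes it from \cite[6.~Theorem]{FarkasLemmaEquiv}, with a footnote recording that the stated version follows by applying that theorem to $A=Q^T$ and with $-b$ in place of $b$ --- which is exactly the observation in your last sentence. Your self-contained argument is the standard one and is sound: the exclusion of (i) and (ii) holding simultaneously via $0 \ge x^T(Qy) = y^T Q^T x \ge y^T b > 0$, and the existence direction by recasting (i) as membership of $b$ in the cone $C=\{Q^Tx - s : x\in\R^n_{\ge 0},\, s\in\R^\ell_{\ge 0}\}$ generated by the columns of $Q^T$ and the vectors $-e_1,\dots,-e_\ell$, separating $b$ strictly from $C$ when (i) fails, using $0\in C$ and positive homogeneity to push the separating threshold to $0$, and then reading off $y\ge 0$, $Qy\le 0$ and $b^Ty>0$ by testing the separating functional against the generators. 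The only non-elementary ingredient is the closedness of finitely generated cones (or, in your alternative route, the bookkeeping of multipliers in Fourier--Motzkin elimination), which you correctly identify and which is a standard fact. What your write-up buys is self-containedness; what the paper's citation buys is brevity, since Farkas's Lemma is classical and plays only an auxiliary role in the proof of \cref{thm:WeightedLPDuality}.
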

	
	\begin{proof}[Proof of \cref{thm:WeightedLPDuality}]
		Fix enumerations~$V = \{v_1, \dots v_n\}$ and~$S = \{s_1, \dots, s_\ell\}$, and let~${\vsj = (A_j, B_j)  \in \sigma}$ for~$j \in [\ell]$.
		Using these enumerations, we shall, for the course of this proof, identify a weight function~$w$ on~$V\!$ with a vector~${x = x(w)}$ in~$\R_{\ge 0}^n$ and a weight function~$w'$ on~$S$ with a vector $y = y(w')$ in $\R_{\ge 0}^\ell$.
		
		Let us define a matrix~$Q = Q(\sigma) \in \R^{n \times \ell}$ via
		\begin{equation*}
			Q_{ij} =
			\begin{cases}
				~~1, & v_i \in B_j \sm A_j;\\
				~~0, & v_i \in A_j \cap B_j;\\
				-1, & v_i \in A_j \sm B_j.
			\end{cases}
		\end{equation*}
		Recall from the definition of~$\varphi_\sigma$ that~$C_i := C_\sigma(v_i)$ (respectively~${D_i := D_\sigma(v_i)}$) consists of all those~$\{A_j, B_j\} \in S$ with~$v_i \in A_j$ (respectively~$v_i \in B_j$).
		So given a weight function~$w'$ on~$S$, we obtain for~$y = y(w') \in \R_{\ge 0}^\ell$ and all~$i \in [n]$ that 
		\begin{equation*}
			(Qy)_i = \sum_{B_j \ni v_i} y_j - \sum_{A_j \ni v_i} y_j = w'(D_i) - w'(C_i).
		\end{equation*}
		So a non-zero weight function~$w'$ on~$S$ is \emph{not} as in \cref{thm:WeightedLPDuality}\,\eqref{DThm2} if and only if~$Qy \le 0$ 
		for~$y = y(w') \in \R_{\ge 0}^\ell$ where~$\le$~is meant coordinate-wise. 
		
		Similarly, let~$w$ be a weight function on~$V\!$ and~$x = x(w) \in \R_{\ge 0}^n$.
		Then we compute for all~$j \in [\ell]$ that
		\begin{equation*}
			(Q^T x)_j = \sum_{v_i \in B_j} x_i - \sum_{v_i \in A_j} x_i = w(B_j) - w(A_j).
		\end{equation*}
		So a weight function~$w$ on~$V\!$ induces~$\sigma$ if and only if we have~$Q^T x > 0$ for ${x = x(w) \in \R_{\ge 0}^n}$.
		Recall from \cref{subsec:Deciders} that~$\sigma$ is induced by some function on~$V\!$ if and only if there exists a function~$w$ with~${w(B) - w(A) \ge 1}$ for all~$(A,B) \in \sigma$.
		Hence, $\sigma$~is induced by some function on~$V\!$ (as in \cref{thm:WeightedLPDuality}\,\eqref{DThm1}) if and only if~$Q^T x \ge \mathbf{1}$ where~$x = x(w) \in \R_{\ge 0}^n$ for some non-zero weight function~$w$ on~$V\!$ and~$\mathbf{1}$ is the constant~$1$ vector in~$\R^\ell$.
		
		The result then follows by applying \cref{lem:Farkas} to~$Q$ and~$b=\mathbf{1}\in \R^\ell$, and denoting~$x = x(w)$ in~\cref{lem:Farkas}\,\eqref{Farkas1} and~$y = y(w')$ in~\cref{lem:Farkas}\,\eqref{Farkas2}.
		The `in particular'-part is immediate from the definition of~$V_\sigma$ and~$\tau_\sigma$.
	\end{proof}
	
	As mapping a separation in~$S$ to its orientation in~$\sigma$ is a bijection between~$S$ and~$\sigma$, we could equivalently define the weight function~$w'$ in~\cref{thm:WeightedLPDuality}\,\eqref{DThm2} on the orientation~$\sigma$ of~$S$; for notational simplicity we will freely switch between these two definitions in what follows.
	
	By \cref{lem:MaximalSeparations}, an orientation~$\sigma$ of~$S$ is induced by a function on~$V\!$ if and only if its set of maximal elements is.
	So applying \cref{thm:WeightedLPDuality} to the set~$\Max = \Max(\sigma)$ of maximal elements of~$\sigma$ and its underlying set of unoriented separations,	we have the following corollary.
	
	\begin{COR}  \label{cor:WeightedLPDualityMax}
		Let~$S$ be a set of separations of a set~$V\!$.
		Let~$\sigma$ be an orientation of~$S$, and let~$\Max = \Max(\sigma)$ be the set of maximal elements of~$\sigma$.
		Then the following two assertions are equivalent:
		\begin{enumerate}[\rm (i)]
			\item There exists a function on~$V\!$ that induces~$\sigma$. \label{DCor1}
			\item For every non-zero weight function~$w'$ on~$\Max$, there exists~$v \in V\!$ with \label{DCor2}
			\begin{equation*}
				\displaystyle\sum_{\substack{(A,B) \in \Max \\\text{with } v \in A}} w'((A,B)) < \displaystyle\sum_{\substack{(A,B) \in \Max \\\text{with } v \in B}} w'((A,B)).\eqno\qed
			\end{equation*}
		\end{enumerate}
	\end{COR}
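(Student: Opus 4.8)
The plan is to obtain \cref{cor:WeightedLPDualityMax} as a direct consequence of \cref{thm:WeightedLPDuality}, applied not to~$\sigma$ itself but to its set~$\Max = \Max(\sigma)$ of maximal elements, together with \cref{lem:MaximalSeparations}. First I would observe that~$\Max$ is itself an orientation of a set of separations of~$V\!$: writing~$S'$ for the set of those unoriented separations~$s$ of~$V\!$ for which some orientation of~$s$ lies in~$\Max$, the inclusion~$\Max \sub \sigma$ and the fact that~$\sigma$ is an orientation of~$S$ together force~$|\Max \cap \{\vs, \sv\}| = 1$ for every~$s \in S'$, so~$\Max$ is an orientation of~$S'$, and the map sending~$s$ to its orientation lying in~$\Max$ is a bijection between~$S'$ and~$\Max$.

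Next I would check that a function on~$V\!$ induces~$\sigma$ if and only if it induces~$\Max$. The forward implication is trivial since~$\Max \sub \sigma$. For the converse, note that~$V\!$ is finite, hence so is~$\sigma$, so every~$\vr \in \sigma$ lies below some maximal element of~$\sigma$, that is, below some element of~$\Max$; the `in particular' part of~\cref{lem:MaximalSeparations} then shows that any weight function inducing~$\Max$ also induces every~$\vr \in \sigma$. Consequently, assertion~\eqref{DCor1} is equivalent to the statement that~$\Max$ is induced by a function on~$V\!$.

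Now I would invoke~\cref{thm:WeightedLPDuality} with the pair~$(S', \Max)$ in place of~$(S, \sigma)$. Its assertion~\eqref{DThm1} then reads `$\Max$ is induced by a function on~$V\!$', which by the previous step is equivalent to~\eqref{DCor1}; and its assertion~\eqref{DThm2} says that every non-zero weight function~$w'$ on~$S'$ admits some~$v \in V\!$ with~$w'(C_{\Max}(v)) < w'(D_{\Max}(v))$. It remains to translate this last condition into~\eqref{DCor2}. Using the bijection between~$S'$ and~$\Max$ recorded above, a weight function on~$S'$ is the same thing as a weight function~$w'$ on~$\Max$, and then by the definition of~$C_\sigma$ and~$D_\sigma$ the set~$C_{\Max}(v)$ corresponds to~$\{(A,B) \in \Max : v \in A\}$ and~$D_{\Max}(v)$ to~$\{(A,B) \in \Max : v \in B\}$, so the inequality~$w'(C_{\Max}(v)) < w'(D_{\Max}(v))$ becomes precisely the displayed inequality in~\eqref{DCor2}. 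Chaining the equivalences~\eqref{DCor1}~$\iff$~\eqref{DThm1}, \eqref{DThm1}~$\iff$~\eqref{DThm2} and~\eqref{DThm2}~$\iff$~\eqref{DCor2} then proves the corollary.

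Since the entire argument is bookkeeping built on the two quoted results, I do not anticipate a genuine obstacle. The only points warranting a moment's care are verifying that~$\Max$ really is an orientation of~$S'$, and that finiteness of~$V\!$ legitimises passing between~$\sigma$ and~$\Max$ in both directions; both are immediate.
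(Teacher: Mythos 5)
Your proposal is correct and follows essentially the same route the paper takes: the paper also derives the corollary by applying \cref{lem:MaximalSeparations} to pass between $\sigma$ and $\Max$, and then invoking \cref{thm:WeightedLPDuality} with $\Max$ and its underlying set of unoriented separations in place of $(S,\sigma)$, identifying weight functions on that underlying set with weight functions on $\Max$ via the canonical bijection. You simply spell out the bookkeeping (that $\Max$ is an orientation of $S'$, that finiteness of $V$ gives every element of $\sigma$ a maximal element above it, and the translation of $C_{\Max}(v)$, $D_{\Max}(v)$ into the displayed sums) that the paper leaves implicit.
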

	
	As an illustration of the power of \cref{cor:WeightedLPDualityMax} and hence \cref{thm:WeightedLPDuality}, let us re-prove \cref{thm:ResilienceWeighted}, which asserts that highly resilient orientations are induced by functions on the ground set.
	
	\begin{PROP} \label{prop:FResilienceWeighted2}
		Let~$S$ be a set of separations of a set~$V$, and let~$\sigma$ be an orientation of~$S$ with~$m$ maximal elements.
		If~$\sigma$ is~$k$-resilient for some integer~$k > \frac{m}{2}$, then~$\sigma$ is induced by a function on~$V\!$.
	\end{PROP}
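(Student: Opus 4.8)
The plan is to deduce the proposition from \cref{cor:WeightedLPDualityMax}, applied to the orientation $\sigma$ and its set $\Max = \Max(\sigma)$ of maximal elements. By that corollary it suffices to establish the following: for every non-zero weight function $w'$ on $\Max$ there is some $v \in V\!$ for which the total $w'$-weight of the $(A,B) \in \Max$ with $v \in A$ is strictly less than the total $w'$-weight of the $(A,B) \in \Max$ with $v \in B$. So no function needs to be written down explicitly; we only have to verify this one combinatorial condition, and resilience is exactly what makes a suitable $v$ available.

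So fix a non-zero weight function $w'$ on $\Max$. We may assume $\sigma \ne \es$, hence $m \ge 1$, and also that $k \le m$: if $k > m$ we may replace $k$ by $m$, which still satisfies $m > \frac m2$ and, since $k$-resilience implies $k'$-resilience for all $k' \le k$, keeps $\sigma$ $k$-resilient. Enumerate $\Max = \{(A_1,B_1), \dots, (A_m,B_m)\}$ so that $w'((A_1,B_1)) \ge \dots \ge w'((A_m,B_m)) \ge 0$, and let $\Max' := \{(A_1,B_1), \dots, (A_k,B_k)\}$ consist of the $k$ separations in $\Max$ of largest $w'$-weight. Since $|\Max'| \le k$ and $\sigma$ is $k$-resilient, $\bigcup_{(A,B)\in\Max'} A \ne V\!$, so we may pick $v \in V\!$ with $v \notin A_i$ for every $i \in [k]$; then $v \in B_i$ for every $i \in [k]$.

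For this $v$, the total $w'$-weight of the $(A_i,B_i) \in \Max$ with $v \in A_i$ is at most $\sum_{i=k+1}^m w'((A_i,B_i))$ (since $v \notin A_i$ for $i \le k$), whereas the total $w'$-weight of those with $v \in B_i$ is at least $\sum_{i=1}^k w'((A_i,B_i))$ (since $v \in B_i$ for $i \le k$). Hence it is enough to show $\sum_{i=k+1}^m w'((A_i,B_i)) < \sum_{i=1}^k w'((A_i,B_i))$, and this is exactly where $k > \frac m2$ enters: it gives $m-k \le k-1$, so each index $i \in \{k+1,\dots,m\}$ can be matched injectively with the index $i-k \in \{1,\dots,m-k\} \subseteq [k]$, with $w'((A_i,B_i)) \le w'((A_{i-k},B_{i-k}))$ by the chosen ordering. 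Summing over these matched pairs and then discarding the remaining nonnegative terms $w'((A_j,B_j))$ for $m-k < j \le k$ yields `$\le$'. To get strictness one rules out equality: if equality held, all the discarded terms would vanish, hence by monotonicity of the ordering and the fact that $k+1 > m-k$ every $w'((A_{k+j},B_{k+j}))$ with $1 \le j \le m-k$ would vanish too, which through the matched equalities forces $w'((A_j,B_j)) = 0$ for all $j \le m-k$ as well; so $w' \equiv 0$, contradicting the choice of $w'$. Thus the inequality is strict, and \cref{cor:WeightedLPDualityMax} produces a function on $V\!$ inducing $\sigma$; this reproves \cref{thm:ResilienceWeighted} without recourse to \cref{thm:DecidableWeighted}.

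I do not expect a real obstacle here: once \cref{cor:WeightedLPDualityMax} is available, the argument is a short pigeonhole-style estimate. The only step that needs a little care is extracting the strict inequality at the end — checking that equality throughout the weighted pairing is incompatible with $w'$ being non-zero — which is where one uses precisely that the maximal separations are listed in non-increasing order of $w'$-weight and that $m - k < k$.
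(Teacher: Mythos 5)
Your proposal is correct and takes essentially the same route as the paper: apply \cref{cor:WeightedLPDualityMax}, let $\Max'$ be the $k$ heaviest maximal separations, use $k$-resilience to find $v$ outside all their small sides, and observe $w'(\Max')>w'(\Max\setminus\Max')$ since $k>\frac m2$. The paper states the last inequality as immediate, and indeed one can see it more directly than via your pairing-plus-equality-case analysis (if $w'(\Max\setminus\Max')=0$ we are done since $w'(\Max')>0$; otherwise with $b:=\max_{\Max\setminus\Max'}w'>0$ every separation in $\Max'$ has weight $\ge b$, giving $w'(\Max')\ge kb>(m-k)b\ge w'(\Max\setminus\Max')$), but this is a cosmetic difference only.
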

	
	\begin{proof}
		Let~$\Max = \Max(\sigma)$ be the set of maximal elements of~$\sigma$.
		We apply \cref{cor:WeightedLPDualityMax} to~$\sigma$ in that we consider an arbitrary non-zero weight function~$w'$~on~$\Max$ and show that case~\eqref{DCor2} in \cref{cor:WeightedLPDualityMax} holds.
		Let~$\Max' \subseteq \Max$ consist of those~$k$ separations in~$\Max$ which have the highest weight with respect to~$w'$.
		Since we have~${k > \frac{m}{2}}$ by assumption, this immediately yields~$w'(\Max') > w'(\Max \setminus \Max')$.
		
		Now~$\sigma$ is~$k$-resilient, so there exists some~$v \in V\!$ which is not contained in the small side of any separation in~$\Max'$.
		By the choice of~$\Max'$, this~$v$ satisfies
		\begin{equation*}
			\displaystyle\sum_{\substack{(A,B) \in \Max \\\text{with } v \in B}} w'((A,B))
			\ge w'(\Max') > w'(\Max \setminus \Max')
			\ge \displaystyle\sum_{\substack{(A,B) \in \Max \\\text{with } v \in A}} w'((A,B)).
		\end{equation*}
		Thus, \cref{cor:WeightedLPDualityMax}~\eqref{DCor2} holds for~$w'$, and since~$w'$ was arbitrarily chosen, this implies that~$\sigma$ is induced by a function on~$V\!$.
	\end{proof}

	\section{Extendable tangles are induced by point sets} \label{sec:Extendable}
	
	Let~$S$ be a set of separations of a set~$V\!$ of `points', and let~$\tau$ be an orientation of~$S$.
	In \cref{sec:Resilience} we analysed various properties of~$\tau$ which ensure that $\tau$~is induced by a function on~$V\!$.
	All these properties required us to consider large subsets of~$\tau$ instead of the usual triples which are required for the definition of a tangle of~$S$.
	In particular, all the notions considered above may be viewed as strengthenings of the triple condition in the definition of a tangle, i.e., we give a stronger condition that an orientation needs to satisfy in order to be a tangle that is induced by some function on~$V\!$.
	
	But how can we guarantee the existence of a function inducing a tangle~$\tau$ of~$S$ if we do not want to strengthen the definition of a tangle in the above sense?
	We know that there exist tangles that are not induced by any function on the ground set of the separations they orient (see e.g.\ \cref{prop:ResIsSharp}).
	So instead of looking for a function inducing~$\tau$ itself, we may try to find one that induces some~$\tau' \subseteq \tau$.
	Ideally, we can do this in such a way that the function inducing~$\tau'$ is still, in some sense, related to the original tangle~$\tau$.
	
	Given an order function on a universe~$U$ of separations of a set~$V$, one natural such subset of a~$k$-tangle~$\tau$ in~$U$, say, consists of all separations in~$\tau$ of order less than some~$k'< k$.
	In other words, we would like to obtain, given a~$k$-tangle~$\tau$ in~$U$, a function  on~$V\!$ that induces the~$k'$-tangle~$\tau'\subseteq \tau$ in~$U$.
	
	One way in which we could try to achieve this consists in proving the following:
	there exists a function~$f: \N \to \N$ with~$f(k) \ge k$ for all~$k \in \N$ such that if a~$k$-tangle~$\tau'$ in~$U$ extends to a~$f(k)$-tangle~$\tau$ in~$U$, then~$\tau'$ is induced by a function on~$V\!$, or even by a subset of~$V\!$.%
	\COMMENT{}
	In this case, we may view the function~$w$ inducing~$\tau'$ as an approximation of a function inducing its extension~$\tau$~-- although~$w$ will in general not orient~$S_{f(k)} \subseteq U$ as~$\tau$. 
	
	Consider for example the~$m$-tangle~$\tau_{m,k}$ in~$U_{bip}(V)$ constructed in \cref{prop:ResIsSharp} for some~$3 \le k\le \frac{m}{2}$.
	This tangle~$\tau_{m,k}$ is not induced by any function on~$V\!$, but if we consider only the separations of order less than~$\frac{m}{2}$ in this example, then they are even induced by a set:
	$V\!$~orients all the separations in~$U_{bip}(V)$ of order less than~$\frac{m}{2}$ in the same way as~$\tau_{m,k}$.
	
	This leads us to the question of whether tangles which extend to tangles of twice their order are always induced by functions or even sets, i.e., whether~${f(k) := 2k}$ is suitable.
	We show that this is indeed the case in that $k$-profiles in~$U$ which extend to regular~$2k$-profiles in~$U$ are induced by subsets of~$V\!$~-- as long as we work in the universe $U = U(V)$ of separations of a set~$V\!$ equipped with our standard order function on~$U$ which assigns to a separation the cardinality of its separator as its order.
	Recall that, for this order function, all~$k$-profiles in~$U$ are induced by functions on~$V\!$~\cite{weighted_deciders_AIC}, but those as above are even induced by subsets of~$V\!$:%
	\COMMENT{}
	
	\begin{THM} \label{thm:InducedTangleDeciderSet}
		Let~$U = U(V)$ be the universe of all separations of a set~$V$ and let~$|\cdot|$ be the standard order function on~$U$.
		If~$\tau'$ is a~$k$-profile in~$U$ for some~$k \in \N$ that extends to a regular~$2k$-profile~$\tau$ in~$U$, then~$\tau'$ is induced by a set~$X \subseteq V\!$ of size at least~$2k$.
	\end{THM}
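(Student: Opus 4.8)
The plan is to use \cref{lem:MaximalSeparations} to reduce the task to producing a single set $X\sub V$ with $|X|\ge 2k$ that induces the set $\Max=\Max(\tau')$ of maximal elements of $\tau'$. The key structural input is the following consequence of $\tau$ being a \emph{regular} profile (note $\tau'$ is regular too, being a subset of $\tau$): \emph{every separation $(A,B)\in\tau$ has $|B|\ge 2k$}. Indeed, if $|B|<2k$ then $\{B,V\}$ has standard order $|B\cap V|=|B|<2k$, so $\{B,V\}\in S_{2k}$; since $(V,B)$ is co-small, regularity forces $(B,V)\in\tau$, and then the distinct separations $(A,B),(B,V)\in\tau$ violate the profile property, as $(B,A)\wedge(V,B)=(B\cap V,\,A\cup B)=(B,V)\in\tau$. (In particular this already forces $|V|\ge 2k$.) Applying this to $\Max$: each $(A,B)\in\Max$ has $|B|\ge 2k$ while $|A\cap B|<k$, so $|B\sm A|>k$; and by submodularity of the standard order, any two distinct $(A,B),(C,D)\in\Max$ satisfy $|(A,B)\vee(C,D)|\le|(A,B)|+|(C,D)|<2k$, so $(A,B)\vee(C,D)\in S_{2k}$, hence $(A,B)\vee(C,D)\in\tau$ by the profile property, and therefore $|B\cap D|\ge 2k$ as well. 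So the separations in $\Max$ have large, low-separator big sides which intersect pairwise in $\ge 2k$ elements.

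Next I would build $X$. Since $\tau$ is a $2k$-profile in $U(V)$ with the standard order, it is induced by some weight function $w\colon V\to\R_{\ge 0}$ by the theorem of Elbracht, Kneip and Teegen~\cite{weighted_deciders_AIC}, and by the reductions in \cref{subsec:Deciders} we may take $w$ integer-valued with $w(B)-w(A)\ge 1$ for all $(A,B)\in\tau$, in particular for all $(A,B)\in\Max$. The aim is to modify $w$ into a $\{0,1\}$-valued function, i.e.\ the indicator of a set $X$, that still induces $\Max$ and has $|X|\ge 2k$. The order gap between $k$ and $2k$ supplies the slack: for $(A,B)\in\Max$ the inequality $w(A)<w(B)$ is really a comparison of the large low-separator regions, since $|B\sm A|>k$ while $|A\cap B|<k$, so there is room to round. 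Concretely I would perform a local exchange process on the current function: as long as it is not $\{0,1\}$-valued decrease a value $>1$, and as long as $|X|<2k$ increase a value $0$ to $1$, always only when this breaks none of the inequalities $|X\cap A|<|X\cap B|$ for $(A,B)\in\Max$. When this gets stuck one argues a contradiction: a stuck configuration would, for each affected element, produce a maximal separation that is ``tight'' on that element, and feeding these tight separations back through the closure statements of the first paragraph (their suprema stay inside $S_{2k}$, so $\tau$ controls them, and every big side has $\ge 2k$ elements) should force either a free safe move or $|X|\ge 2k$ already.

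The main obstacle is precisely this last step: organising the many pairwise interactions among the separations in $\Max$ into one coherent exchange (or flow) argument, so that the $2k$-capacity coming from $|B|\ge 2k$ for every $(A,B)\in\tau$ can be used uniformly to certify that no bad stuck configuration of size $<2k$ exists. The profile structure of $\tau$ should make this work --- for any subfamily of $\Max$ submodularity keeps the relevant suprema in $S_{2k}$, where the profile property and the big-side bound apply --- but converting these local facts into the existence of the desired size-$\ge 2k$ inducing set is the technical heart of the proof. Once such an $X$ is found, \cref{lem:MaximalSeparations} immediately gives that $X$ induces all of $\tau'$, as required.
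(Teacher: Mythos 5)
Your preliminary observations are correct: for every $(A,B)\in\tau$, regularity and the profile property force $|B|\ge 2k$ (your argument is sound), and submodularity combined with the profile property keeps pairwise suprema of elements of $\Max(\tau')$ inside $\tau\cap\vS_{2k}$, so pairwise big-side intersections have size $\ge 2k$. But the actual construction of the inducing set $X$ is missing: you yourself flag that the ``local exchange / rounding'' step is the technical heart and leave it as a hope (``should force either a free safe move or $|X|\ge 2k$ already''). As written, there is no termination argument, no invariant, and no mechanism by which the closure facts of your first paragraph certify that a stuck configuration of size $<2k$ cannot occur. Starting from the Elbracht--Kneip--Teegen weight function and rounding is an attractive idea, but the rounding is precisely where all the difficulty lives, and no rounding scheme is actually exhibited.

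The paper takes a genuinely different and self-contained route that bypasses weight functions entirely: it picks a star $\sigma\subseteq\tau$ whose interior $X=\bigcap_{(C,D)\in\sigma}D$ is of minimum size, proves (via repeated suprema staying in $S_{2k}$ and a contradiction with regularity) that any such interior has size $\ge 2k$, and then shows $|X\cap A|<k$ for every $(A,B)\in\tau'$ by an exchange on stars: if some $(A,B)\in\tau'$ of minimal order had $|X\cap A|\ge k$, one forms the new star $\hat\sigma=\{(A,B)\}\cup\{(B\cap C,A\cup D):(C,D)\in\sigma\}\subseteq\tau$ and computes that its interior is strictly smaller than $X$, a contradiction. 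The minimality is taken over stars (a purely combinatorial object), not over weight functions, and the ``corner'' separations $(B\cap C, A\cup D)$ enter $\tau$ by submodularity plus consistency, not via any external theorem. To repair your proposal you would essentially have to discover this star-minimisation argument (or an equivalent one); the facts you established, while true, do not by themselves bridge the gap.
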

	
	\begin{proof}
		If~$V$ has less than~$2k$~elements, then there exists no regular~\hbox{$2k$-profile~$\tau$} in~$U$, since~$(V,V) \in \tau$ contradicts its regularity.
		Thus, the theorem always holds for~$|V| < 2k$, and we may assume~$|V| \ge 2k$ in what follows.
		
		Our desired set~$X$ inducing~$\tau'$ will be the \emph{interior}~$\bigcap_{(A,B) \in \sigma} B$ of a star~$\sigma$ contained in~$\tau$.
		Let us first show that such interiors cannot be too small.
		
		\begin{CLAIM}\label{lem:induced_star_size}
			The interior of any star contained in a regular~$2k$-profile in~$U$ has at least~$2k$ elements.
		\end{CLAIM}
		
		\begin{proof}
			Suppose not, let~$\tau$ be a regular~$2k$-profile in~$U$, and let~$\sigma \subseteq \tau$ be a star whose interior~$X = \bigcap_{(A,B) \in \sigma} B$ has size~$|X| < 2k$. 
			Note that~$\sigma$ is non-empty as the interior of the empty star is the whole set~$V$ which by assumption has size at least~$2k$.
			
			Let us write~$\sigma=\{(A_1,B_1), \dots, (A_\ell,B_\ell)\}$. 
			We claim that for any~$i\le \ell$ we have~${|(A_1,B_1)\vee \dots \vee (A_i,B_i)| < 2k}$.
			By definition, we have
			\begin{equation*}
				|(A_1,B_1)\vee \dots \vee (A_i,B_i)|=|(A_1\cup \dots \cup A_i)\cap (B_1\cap \dots\cap B_i)|.
			\end{equation*}
			Since~$\sigma$ is a star, we have~${(A_1\cup \dots \cup A_i)\subseteq B_j}$ for every~$j>i$.
			So in particular, we have
			\begin{equation*}
				(A_1\cup \dots \cup A_i)\cap (B_1\cap \dots\cap B_i)\subseteq B_1\cap \dots\cap B_\ell = X.
			\end{equation*}
			Therefore,~${|(A_1,B_1)\vee \dots \vee (A_i,B_i)|\le |X| < 2k}$. 
			Since~$\tau$ is a profile, it follows inductively that~${(A_1,B_1)\vee \dots \vee (A_i,B_i)\in \tau}$ for every~$i\le \ell$.
			In particular, we have~${(Y, X) = (A_1,B_1)\vee \dots \vee (A_\ell,B_\ell) \in \tau}$ where~$Y = \bigcup_{(A,B) \in \sigma} A$.
			
			Since~$|X| < 2k$, the separation~$\{X,V\}$ has order~$< 2k$ and hence an orientation in~$\tau$.
			By the regularity of~$\tau$, this orientation must be~$(X,V)$ because~$(V,X)$ is co-small.
			But this leads to a contradiction since this would imply~$(Y,X) \vee (X,V) = (V,X) \in \tau$ as~$\tau$ is a profile.
		\end{proof}
		
		Let~$\sigma \subseteq \tau$ be a star whose interior~$X = \bigcap_{(C,D) \in \sigma} D$ is of smallest size among all stars contained in~$\tau$.
		By~\cref{lem:induced_star_size} we have~$|X|\ge 2k$.
		We claim that~$X$ induces~$\tau'$.
		
		To prove this, we show that~$|X \cap A| < k$ for every~$(A,B) \in \tau'$.
		Since we have~${|X| \ge 2k}$, this immediately implies that~$X$ induces~$\tau'$.
		So suppose for a contradiction that there exists~$(A, B) \in \tau'$ with ~$|X \cap A| \ge k$, and assume that~$(A, B)$ has minimal order among all such separations in~$\tau'$.
		Note that~$(A,B)$ may be induced by~$X$.
		
		Now for every~${(C,D) \in \sigma}$, the separation~$(A \cap D, B \cup C)$ has at least the order of~$(A, B)$.
		Indeed, we have~$X \subseteq D$ by construction, and therefore we get~$|(A \cap D) \cap X| = |A \cap X| \ge k$.
		Moreover, if~$(A \cap D, B \cup C)$ would have order less than~$(A,B)$, then~$(A \cap D, B \cup C)$ would be contained in~$\tau$ (since~$\tau$ is a profile) and hence in~$\tau' \subseteq \tau$ because~$(A,B)$ has order less than~$k$.
		Then, however,~$(A \cap D, B \cup C)$ would contradict the minimal choice of~$(A,B)$.
		
		By the submodularity of the standard order function,~${(B \cap C, A \cup D)}$ has order at most~$|C,D|$.
		Since~${(B \cap C, A \cup D) \le (C,D) \in \tau}$ and~$\tau$ is consistent, we thus have~$(B \cap C, A \cup D) \in \tau$.
		Hence, the star
		\begin{equation*}
			\hat{\sigma} := \{(A, B)\} \cup \{(B \cap C, A \cup D) \mid (C, D) \in \sigma\}
		\end{equation*}
		is contained in~$\tau$.
		
		We claim that the interior~$\hat{X}$ of~$\hat{\sigma}$ is smaller than~$X$ contradicting the choice of~$\sigma$. 
		Indeed, by definition, we have
		\begin{equation*}
			\hat{X} = B \cap \bigcap_{(C,D) \in \sigma} (A \cup D)
			= (A \cap B) \cup (B \cap X)
			= ((A \cap B) \sm X) \cup (B \cap X).
		\end{equation*}
		Since~$(A,B)$ is a separation of~$V$, the set~$X \subseteq V$ is the disjoint union of~$B\cap X$ and~$(A \cap X) \sm B$.
		So we are done if
		\begin{equation*}
			|(A\cap B) \sm X| < |(A \cap X) \sm B|.
		\end{equation*}
		Let~$h = |A \cap B \cap X|$.
		Since~$|A \cap X| \ge k$, we have~$|(A\cap X) \sm B| \ge k - h$.
		However, we have~$(A,B) \in \tau$, so~$|A \cap B| < k$ and hence~$|(A\cap B) \sm X| < k - h$, completing the proof.
	\end{proof}
	
	Our proof of~\cref{thm:InducedTangleDeciderSet} heavily relies on the assumption that the order of a separation in~$U(V)$ is given by the size of its separator.
	We do not know if a similar result holds for other or even all submodular order functions on~$U(V)$.
	
	\begin{PROB}
		Let~$V$ be a set, and consider any submodular order function on~$U(V)$.
		Is it true that if~$\tau'$ is a~$k$-profile in~$U(V)$ for some~$k \in \N$ which extends to a regular~$2k$-profile~$\tau$ in~$U(V)$, then~$\tau'$ is induced by some subset of~$V$?
		What happens for other universes of separations of~$V$ such as~$U_{bip}(V)$?
	\end{PROB}

	\section*{Acknowledgements}

	The third author gratefully acknowledges support by doctoral scholarships of the Studienstiftung des deutschen Volkes and the Cusanuswerk -- Bisch\"{o}fliche Studienförderung.
	
	\bibliography{collective}
	
\end{document}